\documentclass[12pt]{article}

\usepackage{fullpage}
\usepackage{amsmath, amsthm, amsfonts, amssymb, amstext, mathrsfs, enumerate}
\usepackage{graphicx, ragged2e, lscape, framed, xcolor}
\usepackage{subfiles}

\theoremstyle{plain}
\newtheorem{theorem}{Theorem}[section]

\newtheorem{conjecture}[theorem]{Conjecture}

\newtheorem{claim}{Claim}[section]
\newtheorem{remark}[theorem]{Remark}

\numberwithin{equation}{section}
\allowdisplaybreaks

\newcommand{\affl}[3]{\noindent #1, Email: {\tt #2}\\ \textsc{#3}\\[1.5pt]}

\usepackage[pagebackref]{hyperref}
\hypersetup{
	colorlinks=true,
    urlcolor=purple,
	linkcolor=purple,
    citecolor=purple,
}

\DeclareMathOperator{\SRG}{SRG}
\DeclareMathOperator{\diag}{diag}

\DeclareMathOperator{\supp}{supp}

\def\1{\mbox{\boldmath $1$}}
\def\e{\mbox{\boldmath $e$}}
\def\x{\mbox{\boldmath $x$}}
\def\y{\mbox{\boldmath $y$}}
\def\z{\mbox{\boldmath $z$}}

\title{\textbf{Localization of the Caro--Wei bound and \\its applications to bipartiteness}}
\author{Aida Abiad \and Hitesh Kumar \and Shivaramakrishna Pragada}
\date{}

\begin{document}
\maketitle
\begin{abstract}
We confirm a conjecture of Brause, Randerath, Rautenbach and Schiermeyer (2016) by proving a localized lower bound on the independence number of a graph that strengthens the classical bounds of Fajtlowicz (1978) and of
Caro (1979) and Wei (1981), which in turn settles a conjecture by Bertram and Hor\'{a}k (1996). Our proof is based on a new Motzkin--Straus-type inequality involving local clique numbers and the independence number. We then apply the developed methods to study spectral and algebraic measures of graph bipartiteness. In particular, we extend a theorem of Brandt (1998) on spectral bipartiteness from regular $K_{r+1}$-free graphs to all $K_{r+1}$-free graphs, we improve a general upper bound for the least signless Laplacian eigenvalue of $K_{r+1}$-free graphs, and we disprove a conjecture of de Lima, Nikiforov and Oliveira (2016) in the case of $K_4$-free graphs.\\

\noindent
\textbf{Keywords:} Independence number, Local clique number, Motzkin--Straus inequality,  Signless Laplacian, Bipartiteness

\noindent
\textbf{MSC2020:} 05C50
\end{abstract}

\section{Introduction}


Let $G = (V, E)$ be a finite simple graph of order $n$, size $m$, maximum degree $\Delta(G)$, clique number $\omega(G)$ and independence number $\alpha(G)$. Let $\deg_G(v)$ denote the degree of a vertex $v\in V(G)$. For a vertex $v\in V(G)$, let $c(v)$ denote the size of the largest clique in $G$ containing $v$, that is, the largest clique in the closed of neighbourhood of $v$. Let $A(G)$ be the \emph{adjacency matrix} of the graph $G$. Since $A(G)$ is a real symmetric matrix, all eigenvalues of $A(G)$ are real and we denote them as   
\[ \lambda_1(G) \ge \lambda_2(G) \ge \cdots \ge \lambda_n(G).\]

Let $Q(G) = D(G) + A(G) $ be the \emph{signless Laplacian matrix} of $G$, where $D(G)$ is the diagonal matrix of the degrees of vertices in $G$. We denote the eigenvalues of $Q(G)$ by 
\[ q_1(G) \ge q_2(G) \ge \cdots \ge q_n(G)\ge 0.\]

\subsection{Localization of the Caro--Wei bound}

The following inequality is an immediate consequence of Brooks' Theorem, which states that for any graph $G$, it holds
\begin{equation}\label{eq:n_Delta}
\alpha(G)\ge \frac{n}{\Delta(G)+1}.    
\end{equation}

Fajtlowicz \cite{Fajtlowicz_1978,Fajtlowicz_1984} strengthened \eqref{eq:n_Delta} as follows.

\begin{theorem}[Fajtlowicz bound \cite{Fajtlowicz_1978,Fajtlowicz_1984}]\label{thm:Fajtlowicz}
For any graph $G$,
\begin{equation}\label{eq:Fajtlowicz}
    \alpha(G)\ge \frac{2n}{\Delta(G) + \omega(G)+1}.
\end{equation}
\end{theorem}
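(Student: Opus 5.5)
The plan is a double-counting argument on the edges leaving a maximum independent set. The key structural fact is that vertices with only one neighbour in that set must form cliques.

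Let $I$ be a maximum independent set of $G$, so $|I|=\alpha=\alpha(G)$, and let $e(I,V\setminus I)$ be the number of edges between $I$ and $V\setminus I$.

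\emph{Upper bound.} Every vertex of $I$ has at most $\Delta(G)$ neighbours, so $e(I,V\setminus I)\le \alpha\Delta(G)$.

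\emph{Lower bound, first reduction.} Since $I$ is maximal, every $v\in V\setminus I$ has at least one neighbour in $I$. For $u\in I$, let $P_u$ be the set of vertices $v\in V\setminus I$ whose \emph{only} neighbour in $I$ is $u$. Vertices outside $\bigcup_u P_u$ have at least two neighbours in $I$. Hence
\[ e(I,V\setminus I)\ \ge\ 2(n-\alpha)-\sum_{u\in I}|P_u|. \]

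\emph{Lower bound, the clique step.} The central claim is that $P_u\cup\{u\}$ is a clique. Every vertex of $P_u$ is adjacent to $u$ by definition, so it suffices to show $P_u$ itself is a clique. Suppose instead that $x,y\in P_u$ are nonadjacent. Then $(I\setminus\{u\})\cup\{x,y\}$ is independent: neither $x$ nor $y$ has a neighbour in $I\setminus\{u\}$, and $x\not\sim y$. This set has size $\alpha+1$, contradicting the maximality of $I$. So $P_u\cup\{u\}$ is a clique, and therefore $|P_u|\le \omega(G)-1$.

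\emph{Combining.} Substituting $|P_u|\le\omega(G)-1$ into the lower bound and comparing with the upper bound gives
\[ \alpha\Delta(G)\ \ge\ 2n-2\alpha-\alpha(\omega(G)-1). \]
Rearranging yields $\alpha\bigl(\Delta(G)+\omega(G)+1\bigr)\ge 2n$, which is \eqref{eq:Fajtlowicz}.

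The only nontrivial step is the clique claim for $P_u$, which rests on the exchange argument against maximality. Everything else is routine counting.
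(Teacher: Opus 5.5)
Your proof is correct. The paper never proves this bound directly. It obtains it as a corollary of Theorem~\ref{thm:Caro_Wei_localized}: each summand $\frac{2}{\deg(v)+c(v)+1}$ is at least $\frac{2}{\Delta(G)+\omega(G)+1}$. That theorem in turn rests on the quadratic-form inequality of Theorem~\ref{thm:localized_MS_independence}.

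The combinatorial core of that proof, Claim~\ref{claim:2_alpha_inequality}, is a weighted version of your exchange argument:
\begin{itemize}
\item The paper takes a minimiser $\z$ of $\x^\top(I+C(G)+A(G))\x$ over $S$.
\item It picks a maximum independent set $U$ of $G[\supp(\z)]$ of maximum $\z$-weight. This makes the private neighbours of $u$ form a clique with $u$ and also have weight at most $z_u$.
\item It then uses that $f$ is constant on $\supp(\z)$ (Claim~\ref{claim:function_f}) to turn the edge count into the lower bound $2/\alpha(G)$ for the quadratic form.
\end{itemize}
With uniform weights the maximum-weight selection is vacuous. The paper's count then reads $\sum_{u\in U}(c(u)+1)+e(U,V\setminus U)\ge 2n$, which is exactly your inequality. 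If you use $|P_u|\le c(u)-1$ in place of $\omega(G)-1$, you even get $\sum_{u\in I}(\deg(u)+c(u)+1)\ge 2n$.

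So your route is shorter and purely combinatorial, with no compactness or perturbation step. What it cannot reach is the fully localised sum over \emph{all} vertices, because a single independent set only sees the degrees and clique sizes of its own members. Putting non-uniform weights on vertices and optimising over the simplex is what lets the paper spread the bound over every vertex, via the test vector $x_v=1/(\deg(v)+c(v)+1)$. The same machinery also lets it characterise the equality case.
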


Caro \cite{Caro_1979} and Wei \cite{Wei_1981} localized \eqref{eq:n_Delta} using the degree sequence. 

\begin{theorem}[Caro--Wei bound \cite{Caro_1979, Wei_1981}]
    For any graph $G$, 
    \begin{equation}\label{eq:Caro_Wei}
      \alpha(G) \geq \sum_{v \in V(G)} \frac{1}{\deg(v)+1}.  
    \end{equation}
\end{theorem}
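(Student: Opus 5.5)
We prove the Caro--Wei bound \eqref{eq:Caro_Wei} with the classical random-ordering argument. The idea is to build one independent set whose expected size is exactly the right-hand side. Then some realization of it has at least that many vertices.

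\textbf{Constructing the set.} Choose a linear ordering $\sigma$ of $V(G)$ uniformly at random among all $n!$ orderings. Let $I_\sigma$ be the set of vertices $v$ that precede all of their neighbours in $\sigma$. We check that $I_\sigma$ is independent. Suppose $u,v\in I_\sigma$ were adjacent. Then $u$ precedes $v$, because $u$ precedes all its neighbours. By the same reasoning $v$ precedes $u$, which is a contradiction. Hence $\alpha(G)\ge |I_\sigma|$ for every ordering $\sigma$.

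\textbf{Computing the expectation.} Write $|I_\sigma|=\sum_{v} \mathbf{1}[v\in I_\sigma]$. Fix a vertex $v$ and consider its closed neighbourhood $N[v]$, which has $\deg(v)+1$ elements. Under a uniformly random ordering, the relative order of the vertices of $N[v]$ is uniform. So each vertex of $N[v]$ is equally likely to come first, and $v$ comes first with probability exactly $1/(\deg(v)+1)$. By linearity of expectation,
\[
\mathbb{E}\,|I_\sigma|=\sum_{v\in V(G)}\frac{1}{\deg(v)+1}.
\]
Some ordering $\sigma$ attains at least the expected value. Therefore $\alpha(G)\ge |I_\sigma|\ge \sum_{v}\frac{1}{\deg(v)+1}$.

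\textbf{Remark on difficulty.} There is no real obstacle here. The only point needing care is the symmetry claim that $v$ is first in $N[v]$ with probability $1/|N[v]|$, and this follows from the uniformity of $\sigma$. The argument can also be derandomized into the greedy minimum-degree algorithm with the potential $\sum_{v}1/(\deg(v)+1)$. Deleting a minimum-degree vertex $v$ together with its neighbours removes at most $1$ from this potential, since each deleted vertex $u$ has $\deg(u)\ge\deg(v)$ and there are $\deg(v)+1$ of them. Degrees of the surviving vertices only decrease, so their terms only increase. Induction on $n$ then gives the same bound.
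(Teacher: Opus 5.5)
Your random-ordering proof is correct, and so is the greedy derandomization: each $u\in N[v]$ has $\deg(u)\ge\deg(v)$, so the potential drops by at most $1$, and $v$ can be added to an independent set of $G-N[v]$. Your route is genuinely different from the paper's.

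The paper never proves the Caro--Wei bound on its own; it is cited. Within the paper it is recovered as a corollary of Theorem~\ref{thm:Caro_Wei_localized}: since $c(v)\le\deg(v)+1$, we have $\frac{2}{\deg(v)+c(v)+1}\ge\frac{1}{\deg(v)+1}$. That route is analytic rather than probabilistic. One first proves the Motzkin--Straus-type inequality $\x^\top(I+C(G)+A(G))\x\ge 2/\alpha(G)$ on the simplex (Theorem~\ref{thm:localized_MS_independence}). This uses a global minimizer, the equalization Claim~\ref{claim:function_f}, and a counting argument over a ``good'' maximum independent set and its private neighbours. One then evaluates the inequality at the test vector $x_v\propto 1/(\deg(v)+c(v)+1)$ and bounds the edge terms by $2x_ux_v\le x_u^2+x_v^2$. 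The same short computation with the classical form $\x^\top(I+A(G))\x\ge 1/\alpha(G)$ and $x_v\propto 1/(\deg(v)+1)$ gives Caro--Wei directly.

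\textbf{What your approach buys.} It is elementary and self-contained, needs no optimization over the simplex, and is constructive: it yields an explicit greedy algorithm producing an independent set of the required size.

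\textbf{What the paper's approach buys.} The quadratic-form method is more flexible. The diagonal matrix $C(G)$ lets the local clique number enter, giving the stronger localized bound together with its equality characterization. That inequality is also what the paper later converts, via Theorem~\ref{thm:clique_independence_edge} and the blow-up argument, into Theorem~\ref{thm:weighted_clique_indepenence}. Your random-ordering argument credits each $v$ with exactly $1/(\deg(v)+1)$ and has no built-in mechanism to exploit $c(v)<\deg(v)+1$.
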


Note that \eqref{eq:Caro_Wei} and \eqref{eq:Fajtlowicz} are not comparable. In 1996, Bertram and Hor\'{a}k \cite{Bertram_Horak_1996} proposed the following conjecture as a strengthening of Fajtlowicz's bound \eqref{eq:Fajtlowicz}.

\begin{conjecture}[Bertram--Hor\'{a}k \cite{Bertram_Horak_1996}]\label{conj:Bertram_Horak}
    For any graph $G$, 
    \begin{equation}
         \alpha(G) \geq \sum_{v \in V(G)} \frac{2}{\deg(v)+\omega(G)+1}. 
    \end{equation}
\end{conjecture}

In recent years, there has been growing interest in \emph{vertex} or \emph{edge localizing} classical inequalities in graph theory. The localization process usually involves taking a classical inequality and  replacing some of its global parameters with neighborhood-dependent versions of the same parameters. For instance, in the case of the clique number, one usually replaces the global clique number $\omega(G)$ by the neighborhood dependent $c(v)$. Such localizations generally give stronger results as they capture local structure. We refer the reader to \cite{Aragao_Souza_2024, Balogh_Bradac_Lidicky_2025, Bradac_2022, Malec_Tompkins_2023} for localizations of inequalities in extremal graph theory. Recently, several spectral Tur\'{a}n type results have also been localized; see \cite{Kannan_Kumar_Pragada_2025,Kannan_Kumar_Pragada_2026, Liu_Sun_Wang_Wu_2026, Liu_Ning_2025_weighted, Liu_Ning_2026}.

In 2016, Brause, Randerath, Rautenbach, and Schiermeyer \cite{Brause_Randerath_Rautenbach_Schiermeyer_2016} proposed the following localized conjecture, which implies the Caro--Wei bound, the Fajtlowicz bound, and also Conjecture \ref{conj:Bertram_Horak} by Bertram--Hor\'{a}k.  

\begin{conjecture}[Brause--Randerath--Rautenbach--Schiermeyer \cite{Brause_Randerath_Rautenbach_Schiermeyer_2016}]\label{conj:Caro_Wei_localized}
    For any graph $G$, 
    \begin{equation}
         \alpha(G) \geq \sum_{v \in V(G)} \frac{2}{\deg(v)+c(v)+1}. 
    \end{equation}
\end{conjecture}

We note here that another different localization of Fajtlowicz's bound was established by Henning, L\"{o}wenstein, Southey, and Yeo \cite{Henning_Lowenstein_Southey_Yeo_2014}. Conjecture \ref{conj:Caro_Wei_localized} is known to be true for subquartic graphs, and for perfect graphs \cite{Brause_Randerath_Rautenbach_Schiermeyer_2016}. The triangle-free case follows from Shearer’s stronger degree-sequence bounds for triangle-free graphs \cite{Shearer_1983,Shearer_1991}. A further generalization of Conjecture \ref{conj:Caro_Wei_localized} has been noted by Kelly and Postle in \cite{Kelly_Postle_2024} as the stronger fractional local Reed conjecture through a local demands based coloring, which in turn is equivalent to a weighted formulation of Conjecture \ref{conj:Caro_Wei_localized}, see \cite{Kelly_Postle_2024} for more details. See also other local fractional Reed conjecture variations in \cite{Chudnovsky_King_Plumettaz_Seymour_2013,Edwards_King_2014}. 

Our first result is a proof of Conjecture \ref{conj:Caro_Wei_localized}, which is given in Section \ref{section:Caro_Wei_localization}. Our proof uses a Motzkin--Straus type inequality for the independence number (Theorem \ref{thm:localized_MS_independence}), showed in Section \ref{section:localized_MS_independence}, that we believe is of independent interest.

Although the first part of the paper is motivated by the above problems on the independence number, the techniques we will develop to tackle Conjecture \ref{conj:Caro_Wei_localized} have broader consequences. In particular, combining the localized independence number bound with a graph blow-up method yields a new extremal inequality, analogous in spirit to Tur\'an's theorem, which we will use to study the spectral and algebraic bipartiteness of $K_{r+1}$-free graphs.

\subsection{Spectral and algebraic bipartiteness in $K_{r+1}$-free graphs}

For a graph $G$, let $\epsilon_b(G)$ denote the minimum number of edges that need to be removed to make $G$ bipartite. The estimation of $\epsilon_b$ for $K_{r+1}$-free graphs is equivalent to the well-known Max-Cut problem for $K_{r+1}$-free graphs, and it has received a lot of attention in the last decades. One of the major open problems in this area is the following conjecture. 

\begin{conjecture}[Erd\H{o}s \cite{Erdos_1976, Erdos_Faudree_Pach_Spencer_1988}, Sudakov \cite{Sudakov_2007}]\label{conj:bipartiteness} Let $G$ be a $K_{r+1}$-free graph of order $n$. Then 
\[\frac{\epsilon_b(G)}{n^2} \le 
\begin{cases}
\frac{1}{25} & r= 2;\\
\frac{1}{9} & r = 3;\\
\frac{r-2}{4r} & r\ge 4 \text{ and even};\\
\frac{(r-1)^2}{4r^2} & r\ge 4 \text{ and odd}.
\end{cases}\]
\end{conjecture}

The special case $r=2$ was conjectured by Erd\H{o}s \cite{Erdos_1976, Erdos_Faudree_Pach_Spencer_1988} and remains open. The case $r=3$ was conjectured by Erd\H{o}s \cite{Erdos_Faudree_Pach_Spencer_1988} and resolved by Sudakov \cite{Sudakov_2007}. For $r\ge 4$, the conjecture was made by Sudakov \cite{Sudakov_2007}. The case $r=5$ was recently resolved by Hu, Lidick\'{y}, Martins, Norin and Volec \cite{Hu_Lidicky_Martins_Norin_Volec_2021}. 

Brandt \cite{Brandt_1998_local} related $\epsilon_b(G)$ with the adjacency eigenvalues of $G$ when $G$ is regular. 

\begin{theorem}[Brandt \cite{Brandt_1998_local}]\label{thm:spectral_bipartiteness}
Let $G$ be a regular graph of order $n$. Then
\begin{align*}
 \lambda_1(G)+ \lambda_n(G) &\leq \frac{4}{n}\epsilon_b(G).
\end{align*}
\end{theorem}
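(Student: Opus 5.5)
The plan is a direct Rayleigh-quotient argument with a test vector built from a maximum cut. Let $d$ be the common degree, so $\lambda_1(G)=d$, attained by the all-ones vector. Fix a bipartition $V=S\cup T$ such that exactly $\epsilon_b(G)$ edges have both ends in the same part; this is the partition achieving the maximum cut. Define the vector $\x$ by $x_v=1$ for $v\in S$ and $x_v=-1$ for $v\in T$.

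Then $\x^\top\x=n$ and
\[
\x^\top A(G)\x = 2\sum_{uv\in E} x_ux_v = 2\bigl(\epsilon_b(G) - (m-\epsilon_b(G))\bigr) = 4\epsilon_b(G)-2m .
\]
By the Rayleigh principle, $\lambda_n(G)\le \x^\top A\x/\x^\top \x = (4\epsilon_b(G)-2m)/n$. Regularity gives $2m=nd$, so $2m/n=d=\lambda_1(G)$, and hence
\[
\lambda_n(G)\le \frac{4}{n}\epsilon_b(G)-\lambda_1(G).
\]
Rearranging yields $\lambda_1(G)+\lambda_n(G)\le \frac{4}{n}\epsilon_b(G)$, which is the claim.

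An equivalent route is to note that $\lambda_1+\lambda_n=d+\lambda_n=q_n(G)$ for $d$-regular $G$. One can then bound $q_n(G)\le \x^\top Q(G)\x/\x^\top\x=\frac1n\sum_{uv\in E}(x_u+x_v)^2=4\epsilon_b(G)/n$. I would present it in this signless-Laplacian form, since that formulation is what generalizes to irregular graphs later in the paper.

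There is no real obstacle here. The only point needing care is the identification $\lambda_1(G)=2m/n$, and that is exactly where regularity is used. In an irregular graph we only have $\lambda_1\ge 2m/n$, which points in the wrong direction, so the argument fails there.
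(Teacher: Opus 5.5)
Your proof is correct and follows the route the paper indicates. The paper gives no proof of its own: it cites Brandt and notes that the statement follows from the Desai--Rao bound $q_n(G)\le \frac{4}{n}\epsilon_b(G)$ together with $q_n(G)=\lambda_1(G)+\lambda_n(G)$ for regular graphs. That is your second route, with your $\pm1$ cut-vector Rayleigh quotient supplying the standard proof of the Desai--Rao inequality. Your adjacency-matrix version is the same computation, since $\x^\top Q(G)\x = dn+\x^\top A(G)\x$ for $d$-regular $G$.
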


The quantity $\lambda_1(G) + \lambda_n(G)$ is called the \emph{spectral bipartiteness} of $G$. Note that Theorem \ref{thm:spectral_bipartiteness} is false in general; for instance, consider the graph $G = K_3 \cup 2K_1$, then $n = 5$, $\epsilon_b(G) = 1$ and \[\lambda_1(G) +\lambda_n(G) = 2-1 =1 > \frac{4\epsilon_b(G)}{n} = \frac{4}{5}.\]

Desai and Rao \cite{Desai_Rao_1994} showed that a better spectral parameter to consider is the least eigenvalue of the signless Laplacian. In fact, they show a Cheeger-type isoperimetric inequality for the least positive signless Laplacian eigenvalue. The following theorem follows from the general result by Desai and Rao \cite{Desai_Rao_1994}, and it is also noted explicitly by de Lima, Oliveira, Abreu and Nikiforov \cite{deLima_Oliveira_Abreu_Nikiforov_2011}, and by Fallat and Fan \cite{Fallat_Fan_2012}.

\begin{theorem}[Desai--Rao \cite{Desai_Rao_1994}]\label{thm:algebraic_bipartiteness}
Let $G$ be a graph on $n$ vertices. Then
\[q_n(G) \leq \frac{4}{n}\epsilon_b(G). \]
\end{theorem}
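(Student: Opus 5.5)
The plan is to use the Rayleigh quotient characterization of the least eigenvalue of the signless Laplacian,
\[
q_n(G)=\min_{\x\neq 0}\frac{\x^{T}Q(G)\x}{\x^{T}\x}=\min_{\x\neq 0}\frac{\sum_{uv\in E(G)}(x_u+x_v)^2}{\sum_{v\in V(G)}x_v^2},
\]
and to evaluate it on a test vector built from an optimal bipartition. The identity $\x^{T}Q(G)\x=\sum_{uv\in E}(x_u+x_v)^2$ holds because $\x^{T}D\x=\sum_{uv\in E}(x_u^2+x_v^2)$ and $\x^{T}A\x=2\sum_{uv\in E}x_ux_v$.

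Fix a partition $V(G)=V_1\cup V_2$ that minimizes the number of edges inside the parts. Removing exactly the edges inside $V_1$ or inside $V_2$ makes $G$ bipartite, and any bipartite spanning subgraph obtained by deleting edges has a bipartition of this kind. Hence the number of edges with both ends in the same part is exactly $\epsilon_b(G)$.

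For the test vector, set $x_v=1$ for $v\in V_1$ and $x_v=-1$ for $v\in V_2$. Then $\x^{T}\x=n$. For an edge $uv$ crossing the partition, $(x_u+x_v)^2=0$. For an edge inside a part, $(x_u+x_v)^2=4$. So the numerator equals $4\epsilon_b(G)$, and we obtain
\[
q_n(G)\le \frac{4}{n}\epsilon_b(G).
\]

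There is no serious obstacle in this argument. The only point needing care is the justification that $\epsilon_b(G)$ equals the minimum number of non-crossing edges over all vertex bipartitions. This is immediate: a graph is bipartite exactly when its vertices can be $2$-colored with no monochromatic edge, so the edges one must delete are precisely the monochromatic edges of some $2$-coloring.
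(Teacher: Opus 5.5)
Your argument is correct and complete. The identity $\x^{\top}Q(G)\x=\sum_{uv\in E(G)}(x_u+x_v)^2$, the fact that $\epsilon_b(G)$ is the minimum number of monochromatic edges over all $2$-colourings of $V(G)$, and the $\pm1$ test vector of an optimal colouring together give $q_n(G)\le 4\epsilon_b(G)/n$ directly.

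The paper gives no proof of its own. It obtains the statement by citing Desai and Rao's more general Cheeger-type inequality and specializing it. In that inequality the bipartiteness parameter is the minimum, over all nonempty $S\subseteq V(G)$ and all bipartitions $S=S_1\cup S_2$, of $(e(S_1)+e(S_2)+e(S,V(G)\setminus S))/|S|$. Here $e(\cdot)$ counts edges inside a set and $e(S,V(G)\setminus S)$ counts edges leaving $S$. The upper half of that inequality comes from the same kind of test vector you use, namely $\pm1$ on $S_1,S_2$ and $0$ off $S$. Taking $S=V(G)$ gives the statement, so your proof is the self-contained special case of the cited route.

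The general form buys more. Test vectors supported on a proper subset can give a far better bound: for instance $q_n(G)=0$ whenever $G$ has a bipartite component, although $\epsilon_b(G)/n$ may be large. Desai and Rao also give a Cheeger-type lower bound on $q_n(G)$ in terms of the same parameter. For how the paper uses the statement, comparing $q_n(G)$ with $\epsilon_b(G)$ and with $\lambda_1(G)+\lambda_n(G)$ for regular graphs, your direct argument is all that is needed.
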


Note that $\lambda_1 + \lambda_n$ and $q_n$ of a graph are, in general, incomparable, but for regular graphs they are equal. Therefore, Theorem \ref{thm:algebraic_bipartiteness} strengthens Theorem \ref{thm:spectral_bipartiteness}. The quantity $q_n(G)$ is called the \emph{algebraic bipartiteness} of $G$. For a more systematic study of algebraic bipartiteness and its connections to graph bipartiteness, we refer the reader to the papers \cite{deLima_Oliveira_Abreu_Nikiforov_2011, Desai_Rao_1994,Fallat_Fan_2012}. 

In light of Conjecture \ref{conj:bipartiteness}, one expects analogous bounds to hold for spectral and algebraic bipartiteness of $K_{r+1}$-free graphs. Naturally inspired by the Erd\H{o}s conjecture for $\epsilon_b$ of triangle-free graphs, the triangle-free case has received significant attention. Indeed, Brandt \cite{Brandt_1998_local} conjectured that 
\begin{equation}
\lambda_1(G)+ \lambda_n(G) \leq \frac{4}{25}n    
\end{equation}
for regular triangle-free graphs $G$, and proved the weaker bound 
\begin{equation}\label{eq:Brandt_weak}
    \lambda_1(G)+ \lambda_n(G) \leq (3-2\sqrt{2})n.
\end{equation}
Csikv\'ari \cite{Csikvari_2022} extended \eqref{eq:Brandt_weak} to all triangle-free graphs. Abiad, Taranchuk, and Veluw \cite{Abiad_Taranchuk_Veluw_2026} extended Csikv\'ari's method and obtained upper bounds on spectral bipartiteness for graphs with high odd girth. Following this, Yip \cite{Yip_2026} using Chebyshev polynomials improved \eqref{eq:Brandt_weak} for triangle-free graphs further by a small amount and improved the upper bound even further for graphs with high odd girth.  

de Lima, Nikiforov and Oliveira \cite{deLima_Nikiforov_Oliveira_2016} proved that $q_n(G) \leq \frac{2}{9}n$ for all triangle free graphs. Motivated by the above progress, Balogh, Clemen, Lidick\'y, Norin, and Volec \cite{Balogh_Clemen_Felix_Lidicky_Norin_Volec_2023} used the technique of flag algebras to settle the triangle-free case for algebraic bipartiteness. 

\begin{theorem}[Balogh--Clemen--Lidick\'y--Norin--Volec \cite{Balogh_Clemen_Felix_Lidicky_Norin_Volec_2023}]\label{thm:balogh_qn_trianglefree}
Let $G$ be a triangle-free graph of order $n$. Then
\[ q_n(G) \leq \frac{15}{94}n.\]
\end{theorem}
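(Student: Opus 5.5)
The plan is to bound $q_n(G)$ from above through the Rayleigh quotient
\[
q_n(G)=\min_{\x\neq 0}\frac{\sum_{uv\in E(G)}(x_u+x_v)^2}{\sum_{u\in V(G)}x_u^2},
\]
using test vectors built from neighbourhoods. In a triangle-free graph every neighbourhood $N(v)$ is an independent set. So if $\x$ is the indicator vector of $N(v)$, every edge inside $\supp(\x)$ is absent and the numerator only counts edges leaving $N(v)$. More generally I would combine several neighbourhoods with signs: for an edge $uv$, take $x=\1_{N(u)}-\1_{N(v)}$, where $N(u)$ and $N(v)$ are disjoint because $G$ is triangle-free. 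Then edges between $N(u)$ and $N(v)$ contribute $0$, and the numerator reduces to edges from $N(u)\cup N(v)$ to the rest of the graph. I would also allow families with a few more vertices (paths, $C_5$'s) and take real weights on the resulting vertex classes.

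The next step is to average the Rayleigh-quotient inequality over a uniformly random choice of the base configuration (a vertex, an edge, an induced $P_3$, and so on). For every such choice, numerator $\ge q_n\cdot$ denominator, and both sides are sums over small rooted subgraphs. After averaging, both become linear combinations of densities of triangle-free graphs on at most about $5$--$6$ vertices, normalised by powers of $n$. This gives a family of inequalities
\[
q_n(G)\,P_i(G)\le n\,R_i(G),\qquad i\in I,
\]
where $P_i,R_i$ are homomorphism-density polynomials and the free weights on the classes are parameters. Any nonnegative combination of these inequalities is still valid. The goal is therefore to show that $\tfrac{15}{94}\sum_i c_iP_i-\sum_i c_iR_i\ge 0$ over all triangle-free limits, for a suitable choice of weights and multipliers $c_i\ge 0$.

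That positivity is exactly a flag-algebra statement in the theory of triangle-free graphs. I would use Razborov's semidefinite method: choose types of small size, write the target as a sum of squares of flags plus nonnegative densities, solve the SDP numerically, and round it to an exact rational certificate. The constant $15/94$ is then whatever value the rounded certificate achieves. It is presumably not tight, since the conjectured truth should be $4/25$, attained by the blow-up of $C_5$. As a sanity check, for balanced blow-ups of $C_5$ the chosen test vectors should give a ratio close to $4n/25$, which confirms that the family of test vectors is rich enough.

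The main obstacle is this final step. The Rayleigh-quotient formulation produces a ratio rather than a single density inequality, and the weights on the neighbourhood classes enter quadratically. So one must fix a finite menu of weight patterns in advance, or linearise by homogenising, and then hope that the SDP on graphs of order about $7$ reaches $15/94$. If it does not, the first thing I would try is enlarging the base configurations, for instance adding second neighbourhoods along a $P_3$. I would not try to increase the flag order instead.
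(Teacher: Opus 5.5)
The paper does not prove this theorem. It quotes it from Balogh--Clemen--Lidick\'y--Norin--Volec, who bound $q_n$ by the ratio of induced $P_4$ to $P_3$ counts and then bound that ratio with flag algebras. Your outline follows the same strategy, and your edge vector alone already gives that whole reduction.

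For an edge $uv$ and $\x=\1_{N(u)}-\1_{N(v)}$, the numerator counts the edges leaving $N(u)\cup N(v)$. These are exactly the induced $P_4$'s having $uv$ as an end edge. Also $\sum_{uv\in E}(\deg u+\deg v)=\sum_v\deg(v)^2$. Summing $\x^\top Q(G)\x\ge q_n(G)\|\x\|^2$ over all edges therefore gives, with $P_k$ the path on $k$ vertices,
\[
q_n(G)\,\mathrm{hom}(P_3,G)\le 2\,\#\{\text{induced }P_4\text{ in }G\}=\mathrm{hom}(P_4,G)-\mathrm{hom}(C_4,G),
\]
exactly, for every triangle-free $G$. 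So there are no free weights and no ratio-versus-linear issue, and the larger configurations are not needed. The theorem follows from the single linear inequality $t(P_4,W)-t(C_4,W)\le\frac{15}{94}\,t(P_3,W)$ over triangle-free limits $W$.

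Because the reduction is an identity in homomorphism densities, it transfers to every finite $G$ through the step graphon of $G$, which is the limit of the triangle-free blow-ups $G^{(t)}$. Equivalently, $q_n(G)/n$ is blow-up invariant, since $q_{\min}(G^{(t)})=t\min\{q_n(G),\delta(G)\}=t\,q_n(G)$. You need to say this explicitly, because a flag-algebra bound by itself holds for finite graphs only up to $o(1)$.

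As a proof, however, the proposal has a genuine gap. That density inequality is the entire content of the theorem, and you neither produce nor check a certificate for it. Saying that $\frac{15}{94}$ is ``whatever value the rounded certificate achieves'' does not prove $\frac{15}{94}$. Until an explicit, exactly rounded sum-of-squares certificate is exhibited and verified, this is a research plan rather than a proof.

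Your calibration is also wrong.
\begin{itemize}
\item The balanced blow-up of $C_5$ has $q_n=\frac{3-\sqrt5}{10}n\approx0.076n$; see the spectrum of $Q(C_5^{(t)})$ in the proof of Theorem~\ref{thm:K4_free_counterexample}.
\item $\frac{4}{25}n$ is that graph's value of $4\epsilon_b/n$, not of $q_n$. The conjectured truth for $q_n$ is $0.14n$, attained by the Higman--Sims graph (Section~\ref{section:open_problems}).
\item A test vector with ratio near $\frac{4}{25}n$ on some graph confirms nothing, since Rayleigh quotients only bound $q_n$ from above.
\end{itemize}
The meaningful check runs the other way: the averaged ratio must be at most $\frac{15}{94}n$ on \emph{every} triangle-free graph. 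For the edge family, Higman--Sims gives $\frac{\mathrm{hom}(P_4)-\mathrm{hom}(C_4)}{\mathrm{hom}(P_3)}=\frac{739200}{48400}\approx0.153\,n$ (with $n=100$), and the same value holds for all its blow-ups. So no SDP can certify anything below about $0.153$ with this family, and the slack below $\frac{15}{94}\approx0.1596$ is small.
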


In the regular case, $\lambda_1 + \lambda_n = q_n$, and so Theorem \ref{thm:balogh_qn_trianglefree} also settles the conjecture of Brandt for regular triangle-free graphs. 

For general $K_{r+1}$-free graphs, the following results are known.

\begin{theorem}\label{thm:spectral_bipartiteness_r_old} Let $G$ be a $K_{r+1}$-free graph of order $n$ where $r\ge 3$. 
\begin{enumerate}[$(i)$]
\item \emph{(Brandt \cite{Brandt_1998_local}).} If $G$ is regular, then 
    \[\frac{\lambda_1(G) + \lambda_n(G)}{n} \leq \left(2\sqrt{1-\frac{1}{r}}-1\right)^2 .\]
\item \emph{(de Lima--Nikiforov--Oliveira \cite{deLima_Nikiforov_Oliveira_2016})}. We have 
    \[ \frac{q_n(G)}{n} \leq  1 - \frac{3}{3r-1}.\]
\end{enumerate}
\end{theorem}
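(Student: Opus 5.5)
Both parts of Theorem~\ref{thm:spectral_bipartiteness_r_old} would follow from the Rayleigh quotient
\[
q_n(G)=\min_{\x\neq 0}\frac{\x^{T}Q(G)\x}{\x^{T}\x}=\min_{\x\neq 0}\frac{\sum_{uv\in E}(x_u+x_v)^2}{\sum_v x_v^2},
\]
evaluated on a two-valued test vector built from a large independent set. For (i), I would first note that $G$ is regular, so $\lambda_1(G)+\lambda_n(G)=q_n(G)$, and it suffices to bound $q_n$. I would fix an independent set $S$ with $|S|=a$, write $T=V\setminus S$, and put $x_u=1$ for $u\in S$ and $x_u=-s$ for $u\in T$, with a parameter $s\ge 0$ chosen later. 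Since $S$ spans no edges,
\[
q_n(G)\le \frac{e(S,T)(1-s)^2+4s^2e(T)}{a+s^2(n-a)}.
\]

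For (i), regularity of degree $d$ gives $e(S,T)=da$ and $2e(T)=d(n-2a)$. The quotient then becomes an explicit function of $d/n$, $a/n$ and $s$. I would carry out three steps.
\begin{enumerate}[$(1)$]
\item Minimize over $s$. This is a one-variable rational function, handled by calculus or by viewing it as the smaller root of a $2\times 2$ generalized eigenproblem.
\item Feed in the $K_{r+1}$-freeness. A $d$-regular $K_{r+1}$-free graph satisfies $d\le (1-\frac1r)n$ by Tur\'an's theorem, since $nd/2\le (1-\frac1r)n^2/2$. For a lower bound on $a$, I would take $S$ to be a maximum independent set inside a neighbourhood chain. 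Concretely, iterate $N(v_1)\supseteq N(v_1)\cap N(v_2)\supseteq\cdots$ down $r-1$ levels. Each common neighbourhood loses at most $n-d$ vertices, and after $r-1$ steps it must be independent, because otherwise we would find a $K_{r+1}$. This gives $a\ge n-r(n-d)$, or a related bound of the form $a\ge$ a linear function of $d$.
\item Optimize the resulting expression over $d\in[0,(1-\frac1r)n]$. The square-root form $\bigl(2\sqrt{1-\frac1r}-1\bigr)^2$ strongly suggests that the optimum over $s$ has the shape $(\sqrt{\cdot}-\sqrt{\cdot})^2$, attained at the boundary $d/n=1-\frac1r$.
\end{enumerate}

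For (ii) there is no regularity, so $e(S,T)$ and $e(T)$ are no longer determined by $a$. Here I would choose $S$ to be the common-neighbourhood independent set described above, or alternatively average the Rayleigh bound over all choices of a starting vertex or clique. Averaging converts the edge counts into degree sums. I would then use $\sum_v\deg(v)=2m\le(1-\frac1r)n^2$ and convexity to reach a bound in $n$ and $r$ alone, and finally choose $s$ to produce $1-\frac{3}{3r-1}$.

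I expect the main obstacle to be step $(2)$: getting the right lower bound on the independent set, or the right averaged edge counts in (ii), so that after optimizing $s$ the constants come out exactly as stated rather than only up to a weaker constant. If the neighbourhood-chain bound is too weak, I would first try replacing it with Fajtlowicz's bound \eqref{eq:Fajtlowicz} with $\omega(G)\le r$, and then with a weighted version of the test vector that is not constant on $T$ but proportional to degrees.
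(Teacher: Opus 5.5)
\textbf{Part (i) has a genuine gap.} Your plan for (i) has the right overall shape: a Hoffman-type bound, a lower bound on $a$ that is linear in $d$, then optimization over $d\le(1-\frac1r)n$. But step (2), as proposed, cannot give the stated constant.

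First, step (1) gives nothing beyond Hoffman's bound. For $d$-regular $G$ and independent $S$, the $2\times 2$ quotient matrix of $Q(G)$ with respect to $(S,T)$ has eigenvalue $2d$ (from $\1$) and trace $2d+\frac{d(n-2a)}{n-a}$. So minimizing your quotient over $s$ returns exactly $\lambda_1+\lambda_n\le d-\frac{da}{n-a}$, attained at $s=\frac{a}{n-a}$. The constant is therefore decided entirely by the lower bound on $a$, and the bound decreases as $a$ grows.

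Your neighbourhood chain gives at best $a\ge n-(r-1)(n-d)$; your $n-r(n-d)$ is weaker still. This is vacuous for $d\le\frac{r-2}{r-1}n$. At $d=\frac{r-2}{r-1}n$ your method certifies only $\frac{\lambda_1+\lambda_n}{n}\le\frac{r-2}{r-1}$, which exceeds $\bigl(2\sqrt{1-\frac1r}-1\bigr)^2$ for every $r\ge 3$. For $r=3$ this is $\frac12$ against $0.4007\ldots$.

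Your fallback fails too. Fajtlowicz's bound applied to $G$ gives $\alpha(G)\ge\frac{2n}{d+r+1}=O(1)$ when $d=\Theta(n)$.

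Step (3) is also misdirected. With the correct independence bound the optimum is not at $d/n=1-\frac1r$, where the bound is only $1-\frac2r$. It is at the interior point $d/n=2p-\sqrt p$, where $p=1-\frac1r$.

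\textbf{The missing ingredient} is $\alpha(G)\ge d-\bigl(1-\frac2r\bigr)n$ for $d$-regular $K_{r+1}$-free $G$. This is Fajtlowicz's bound applied to the \emph{complement} $\overline{G}$, which is $(n-1-d)$-regular with $\alpha(\overline{G})=\omega(G)\le r$ and $\omega(\overline{G})=\alpha(G)$. Equivalently, it is Theorem~\ref{thm:clique_independence_edge} with $2m/n^2=d/n$.

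It also follows from a direct count. Take a maximum clique $K$. For $v\in K$, let $X_v$ be the set of vertices adjacent to all of $K\setminus\{v\}$ but not to $v$. Each $X_v\cup\{v\}$ is independent. Double counting $\sum_w|N(w)\cap K|=\omega d$ gives $\sum_{v\in K}(|X_v|+1)\ge \omega d-(\omega-2)n$. So some $X_v\cup\{v\}$ has size at least $d-(1-\frac2\omega)n\ge d-(1-\frac2r)n$.

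Now insert $a/n\ge x-(1-\frac2r)$, with $x=d/n\le p$, into $x\frac{1-2a/n}{1-a/n}$ and maximize over $x$. This gives exactly $(2\sqrt p-1)^2$. It is the computation in the paper's proof of Theorem~\ref{thm:Brandt_generalization}. There it is done for all graphs, using Fiol's bound (Theorem~\ref{thm:weighted_Hoffman}), Theorem~\ref{thm:weighted_clique_indepenence} with $\y=(x_v^2)$, and Wilf's inequality. The paper only cites the present statement and recovers (i) this way.

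\textbf{Part (ii)} is too vague to check. It also lacks the device that makes the chain usable without regularity. The chain must be run with $\delta(G)$, which forces a case split via $q_n\le\delta(G)$ (test vector $\e_v$).

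If $\delta>\frac{3r-4}{3r-1}n$, the chain gives an independent $S$ with $\frac{2n}{3r-1}<a\le n-\delta$, so $a/n<\frac{3}{3r-1}<\frac12$. Then use $e(S,T)\le a(n-a)$, Tur\'an's bound on $e(T)$, and $s=\frac{a}{n-a}$. This gives $q_n\le n\bigl[1-(2+\frac2r)\frac an(1-\frac an)\bigr]\le\bigl(1-\frac{3}{3r-1}\bigr)n$. So (ii) is recoverable within your framework once this step is added.

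The paper instead obtains the stronger bound $q_n/n\le(1-\frac1r)^2$ (Theorem~\ref{thm:q_min_new_bound}). It does so by applying Theorem~\ref{thm:DNN_Motzkin_Straus} to the doubly non-negative matrix $M=Q-q_nI$.
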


\begin{remark}
    When $r = 3,5$, the results of Sudakov \cite{Sudakov_2007} and Hu et al. \cite{Hu_Lidicky_Martins_Norin_Volec_2021} for $\epsilon_b$ give best-known bounds for $q_n$ by Theorem \ref{thm:algebraic_bipartiteness}, and $\lambda_1 + \lambda_n$ in the regular case by Theorem \ref{thm:spectral_bipartiteness}.
\end{remark}

Our second main result is an extension of the bound of Brandt for spectral bipartiteness in Theorem \ref{thm:spectral_bipartiteness_r_old} $(i)$ to all graphs.  

\begin{theorem}\label{thm:Brandt_generalization} Let $G$ be a $K_{r+1}$-free graph of order $n$. Then
    \[\frac{\lambda_1(G) + \lambda_n(G)}{n} \leq \left(2\sqrt{1-\frac{1}{r}}-1\right)^2.\]
\end{theorem}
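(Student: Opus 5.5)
Here is the plan. Let $\x$ be a unit Perron eigenvector for $\lambda_1$, with $\x\ge 0$, and let $\y$ be a unit eigenvector for $\lambda_n$. Then
\[\lambda_1+\lambda_n=\sum_{uv\in E}2(x_ux_v+y_uy_v).\]
We use Csikv\'ari's device. Set $z^{\pm}_u = x_u \pm y_u$ and observe
\[x_ux_v+y_uy_v=\tfrac12\left(z^+_uz^+_v+z^-_uz^-_v\right).\]
The aim is to bound quadratic forms of the type $\sum_{uv\in E}w_uw_v$ with $w\ge 0$ (or with signs), using only $K_{r+1}$-freeness and the norms of $\x,\y$. The Motzkin--Straus inequality says $\sum_{uv\in E}2w_uw_v\le (1-\frac1r)(\sum_u w_u)^2$ for $w\ge0$. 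The constraint we have, however, is on $\ell_2$ norms, so an $\ell_1$ bound by itself gives only an $O(\sqrt n)$-type control. This is where the factor $n$ comes from: by Cauchy--Schwarz, $(\sum_u |w_u|)^2\le n\sum_u w_u^2$.

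\textbf{Step 1.} Split $\y=\y^+-\y^-$ into its positive and negative parts, with $\|\y^+\|^2+\|\y^-\|^2=1$. Write $A$ for $A(G)$. Then
\[\y^TA\y=(\y^+)^TA\y^++(\y^-)^TA\y^- - 2(\y^+)^TA\y^-,\]
and similarly decompose $\x^TA\x$ over the two vertex sets $P=\supp \y^+$ and $N=V\setminus P$. The key is to use the eigenvalue equation for $\lambda_n$ on the cross term. Equivalently, bound
\[\lambda_1+\lambda_n\le \max_{\|\x\|=\|\y\|=1,\ \x\ge0}\left(\x^TA\x+\y^TA\y\right)\]
subject to $\x\perp\y$ or a suitable substitute. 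I would follow Brandt's regular proof. There, with $\x=\1/\sqrt n$, one has
\[\lambda_1+\lambda_n=\frac1n\sum_{uv}(1+\sqrt n\,y_u)(1+\sqrt n\,y_v)\cdot(\ldots),\]
and the orthogonality $\sum_u y_u=0$ lets one complete the square into nonnegative weights $w_u=(x_u+ty_u)^2$-type vectors. For general $G$, Perron's vector replaces $\1$, and $\x\perp\y$ is still available.

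\textbf{Step 2.} Optimize over a parameter $t$. Consider
\[\lambda_1+\lambda_n=\x^TA\x+\y^TA\y\le \frac{(\x+t\y)^TA(\x+t\y)+(\x-t\y)^TA(\x-t\y)}{\ldots},\]
choosing combinations so that the relevant weight vectors are entrywise nonnegative, e.g. $w=|\x+\y|$ style. Apply Motzkin--Straus to each nonnegative vector, then Cauchy--Schwarz: $\|w\|_1^2\le n\|w\|_2^2$. The norms satisfy $\|\x\pm\y\|^2=2$ by orthogonality. Optimizing the mix gives an expression of the form $\bigl(2\sqrt{1-1/r}-1\bigr)^2n$. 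The shape $(2\sqrt{a}-1)^2=4a-4\sqrt a+1$ suggests that the optimization is a one-variable quadratic in a parameter equal to $\sqrt{1-1/r}$.

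\textbf{Main obstacle.} The hard part is handling the negative entries: Motzkin--Straus only controls nonnegative weights. If needed, I would replace Motzkin--Straus with the paper's localized Motzkin--Straus/blow-up inequality (the Tur\'an-type inequality promised in the introduction). Applied to blow-ups of $G$ weighted by $x_u^2$, it should give a bound on edges between positive and negative parts that is valid without regularity. I would first try to verify that Brandt's original computation only uses $\sum_u x_u y_u=0$ and $\sum_u x_u^2=1$, so that the Perron vector can replace $\1/\sqrt n$ throughout.
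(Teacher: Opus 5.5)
There is a genuine gap, and it lies in the overall strategy, not in unfinished computations. In Step 1 you replace $\lambda_1+\lambda_n$ by $\max\{\x^\top A\x+\y^\top A\y:\ \|\x\|=\|\y\|=1,\ \x\ge 0,\ \x\perp\y\}$ and call this equivalent. It is not: that maximum is exactly $\lambda_1+\lambda_2$. To see this, take $\x$ a nonnegative Perron vector and $\y$ a $\lambda_2$-eigenvector orthogonal to it; Ky Fan gives the reverse inequality. Now take two disjoint copies of the Tur\'an graph $T_r(n/2)$, which is $K_{r+1}$-free and even regular. It has $\lambda_1+\lambda_2=(1-\frac1r)n$. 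With $a=\sqrt{1-\frac1r}$ one has $(2a-1)^2-a^2=(3a-1)(a-1)<0$, so this exceeds the target.

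Consequently, no argument can work if it uses only nonnegativity of $\x$, orthogonality, unit norms, and upper bounds valid for all test vectors. In particular, Motzkin--Straus applied to $|\x\pm t\y|$ followed by $\|w\|_1^2\le n\|w\|_2^2$ gives, at $t=1$, only $\lambda_1+\lambda_n\le 2(1-\frac1r)n$. For the same reason, the check you propose on Brandt's computation must fail. A proof using only $\sum_u x_uy_u=0$ and $\sum_u x_u^2=1$ would also bound $\lambda_1+\lambda_2$ for regular graphs, which the example above rules out.

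The missing ingredient is something that uses the minimality of $\lambda_n$, i.e., a lower bound on $-\lambda_n$. The paper gets this from Fiol's Perron-weighted Hoffman bound: for the unit Perron vector $\x$ and any independent set $U$, $\sum_{u\in U}x_u^2\le \frac{-\lambda_n}{\lambda_1-\lambda_n}$. It then produces an independent set of large $x^2$-weight.

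\begin{itemize}
\item Set $\z=(x_v^2)_{v}\in S$. Cauchy--Schwarz over the edges gives $\lambda_1^2\le 2m\,\z^\top A\z$.
\item The blow-up inequality $\z^\top A\z\le 1-\frac2\omega+\alpha_{\z}(G)$, together with $\lambda_1\ge 2m/n$, yields $\alpha_{\z}(G)\ge \frac{\lambda_1}{n}-(1-\frac2\omega)$.
\item Plugging this into the Hoffman bound gives $-\lambda_n\ge \lambda_1\alpha_{\z}/(1-\alpha_{\z})$. So $\lambda_1+\lambda_n$ is bounded by an explicit function of $\lambda_1/n$.
\item The maximum of that function over $[0,1-\frac1\omega]$ (using Wilf's bound) is $(2\sqrt{1-\frac1\omega}-1)^2$. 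The case $\lambda_1/n\le 1-\frac2\omega$ is trivial, and $\omega\le r$ plus monotonicity finishes the proof.
\end{itemize}

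Your fallback of applying the blow-up inequality with weights $x_u^2$ uses the right object. However, it only pays off once paired with a Hoffman-type bound that turns independent-set weight into negativity of $\lambda_n$. Your plan has no such step, and the $\x\pm\y$ decomposition plays no role.
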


We prove Theorem \ref{thm:Brandt_generalization} in Section \ref{section:spectral_bipartiteness}. Our proof uses an inequality (see Theorem \ref{thm:clique_independence_edge}) proved in Section \ref{section:graph_blowup_trick}, analogous to Tur\'{a}n's inequality, that we believe is of independent interest.

Furthermore, we improve the upper bound on algebraic bipartiteness of de Lima et al. from Theorem \ref{thm:spectral_bipartiteness_r_old} $(ii)$ as follows. 

\begin{theorem}\label{thm:q_min_new_bound} Let $G$ be a $K_{r+1}$-free graph of order $n$. Then
    \[ \frac{q_n(G)}{n} \le \left(1-\frac{1}{r}\right)^2.\]
\end{theorem}

We prove Theorem \ref{thm:q_min_new_bound} in Section \ref{section:algebraic_bipartiteness}. Our proof is a consequence of a recent result of Liu, Tang, and Zhang \cite{Liu_Tang_Zhang_2026}, which generalizes Motzkin--Straus Theorem to doubly non-negative matrices.  

de Lima, Nikiforov, and Oliveira \cite{deLima_Nikiforov_Oliveira_2016} conjectured the following upper bound for algebraic bipartiteness, which, if true, would be sharp for Tur\'{a}n graphs $T_r(n)$.   

\begin{conjecture}[de Lima, Nikiforov, and Oliveira \cite{deLima_Nikiforov_Oliveira_2016}]\label{conj:algebraic_K_r_free}  Let $G$ be a $K_{r+1}$-free graph of order $n$ where $r\ge 3$. Then
    \[ \frac{q_n(G)}{n} \le \left(1-\frac{2}{r}\right).\]
\end{conjecture}

A stronger conjecture of de Lima et al. \cite{deLima_Nikiforov_Oliveira_2016} says that $q_n$ is maximized uniquely by the Tur\'{a}n graph $T_r(n)$ among $K_{r+1}$-free graphs of order $n$. However, Oboudi \cite{Oboudi_2022} disproved this stronger conjecture by showing that there are other complete multipartite graphs without balanced partite sets that still achieve the same conjectured upper bound. We disprove Conjecture \ref{conj:algebraic_K_r_free} when $r = 3$.

\begin{theorem}\label{thm:K4_free_counterexample}
There is a sequence of $K_4$-free graphs $G_n$ of order $n$ such that
\[
 \lim_{n\to\infty}\frac{q_{n}(G_n)}{n}
 =\frac{60-4\sqrt5}{137+7\sqrt5}
 =0.3344572553\ldots>\frac{1}{3}.
\]  
\end{theorem}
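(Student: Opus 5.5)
We will construct $G_n$ as a weighted blow-up of a small fixed $K_4$-free graph $H$ on vertex set $\{1,\dots,k\}$. Each vertex $i$ is replaced by an independent set $V_i$ of size $\approx x_i n$, where $x_i\ge 0$ and $\sum x_i=1$. Two classes $V_i,V_j$ are joined completely when $ij\in E(H)$. Blowing up by independent sets creates no new cliques, so $G_n$ stays $K_4$-free.

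The first step is to compute $q_n(G_n)$ asymptotically. By the Rayleigh quotient, $q_n(G)=\min_{\|\x\|=1}\sum_{uv\in E}(x_u+x_v)^2$. For a blow-up the minimizing eigenvector can be taken constant on each class, apart from eigenvectors supported inside a single class; those are orthogonal to constants on $V_i$ and have eigenvalue equal to the degree $d_i=n\sum_{j\sim i}x_j$. The spectrum of $Q(G_n)/n$ is therefore the union of two sets:
\begin{itemize}
\item the degrees $\delta_i=\sum_{j\sim i}x_j$, each with multiplicity $|V_i|-1$;
\item the eigenvalues of the $k\times k$ quotient matrix $M=\diag(\delta)+A(H)\diag(x)$. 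This matrix is similar to the symmetric matrix $\diag(\delta)+D_x^{1/2}A(H)D_x^{1/2}$, where $D_x=\diag(x)$.
\end{itemize}
Hence
\[
\lim_n \frac{q_n(G_n)}{n}=\min\Bigl\{\min_i \delta_i,\ \lambda_{\min}(M)\Bigr\},
\]
and rounding the class sizes only perturbs this by $o(1)$. The problem reduces to a finite-dimensional optimization over $H$ and $x$: find both quantities exceeding $1/3$.

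The second step is choosing $H$. Complete multipartite $H$, such as Turán-type graphs, give at most $1/3$, as Oboudi's examples suggest. So $H$ must be non-multipartite, with odd structure that prevents cheap near-bipartite test vectors. The $\sqrt5$ in the target strongly suggests a $C_5$-based construction. A natural candidate is the join-like graph built from $C_5$: for example, $C_5$ together with an extra independent class adjacent to all of the cycle. That graph contains triangles but, since $C_5$ is triangle-free, no $K_4$.

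By symmetry we will give the cycle classes equal weight $a$ and the extra classes weight $b$, with $5a+b=1$ or a variant. Using the $\mathbb{Z}_5$-symmetry, $M$ block-diagonalizes via Fourier characters of $C_5$. The nontrivial characters give eigenvalues involving $2\cos(2\pi/5)=(\sqrt5-1)/2$ and $2\cos(4\pi/5)=-(\sqrt5+1)/2$. This is where $\sqrt5$ enters. The trivial character gives a small $2\times 2$ block.

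The third step is to optimize. We maximize $\min\{\delta_i,\lambda_{\min}\}$ over the weights. At the optimum, the smallest Fourier-block eigenvalue should be balanced against either the smallest class degree or the trivial-block eigenvalue. Solving the resulting quadratic equations should give $\frac{60-4\sqrt5}{137+7\sqrt5}$. We will then verify that this value exceeds $1/3$, which amounts to $180-12\sqrt5>137+7\sqrt5$, that is, $43>19\sqrt5\approx42.485$.

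The main obstacle is guessing the right base graph $H$ and symmetry ansatz that produce exactly the stated constant. If the $C_5$-plus-apex guess fails, I would next try the $C_5$ blow-up of some other graph, or a Möbius-type $K_4$-free graph, and numerically optimize $\lambda_{\min}$ over the weights to identify the correct $H$ before doing the exact algebra. Rigorously justifying that the block eigenvalues are the smallest, and handling the rounding, is routine.
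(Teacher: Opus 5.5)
Your proposal is correct and is essentially the paper's proof: the $C_5$-plus-apex blow-up with weight $a$ on each cycle class and $1-5a$ on the apex class is exactly $G(t)=\overline{K}_{n-5t}\vee C_5^{(t)}$ with $t\approx an$. Your quotient/Fourier decomposition yields the same three candidates $5a$, $1-\frac{7+\sqrt5}{2}a$ and $\frac{1+4a-\sqrt{1+8a-64a^2}}{2}$, the last coming from the trivial block $\bigl(\begin{smallmatrix}1-a & 1-5a\\ 5a & 5a\end{smallmatrix}\bigr)$; the paper instead reads these off from the join-spectrum result, Theorem~\ref{thm:q_join_spectrum}. Balancing the last two gives $a=\frac{15+\sqrt5}{95+11\sqrt5}$ and common value $\frac{40}{95+11\sqrt5}=\frac{60-4\sqrt5}{137+7\sqrt5}$, with the apex-class degree $5a\approx 0.72$ slack, so your first guess already works and no fallback search is needed.
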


We prove Theorem \ref{thm:K4_free_counterexample} in Section \ref{section:algebraic_bipartiteness}. It does not seem easy to disprove Conjecture \ref{conj:algebraic_K_r_free} for $r\ge 4$; see the discussion in Section \ref{section:open_problems}.

\section{A Motzkin-Straus type inequality for the independence number}
\label{section:localized_MS_independence}
Consider a simple graph $G$ of order $n$. Let $S$ denote the standard simplex given by 
\[S := \left\{\x = (x_v)_{v\in V(G)} \in \mathbb{R}^n : \sum_{v \in V(G)} x_v = 1,\ x_v \geq 0,\ v\in V(G)\right\}.\]
For a vector $\x\in \mathbb{R}^n$, the \emph{support} of $\x$ is given by $\supp(\x) = \{v\in V(G) : x_v\neq 0\}$.

Motzkin-Straus \cite{Motzkin_Straus_1965} established the following remarkable result concerning the clique number, which has found many applications in spectral Tur\'{a}n-type results; see \cite{ Kannan_Kumar_Pragada_2025, Kannan_Kumar_Pragada_2026,Liu_Ning_2025_weighted, Liu_Ning_2026}.

\begin{theorem}[\cite{Motzkin_Straus_1965}]\label{thm:Motzkin_Straus}
Let $G$  be a graph with adjacency matrix $A(G)$ and clique number $\omega(G)$. For any $\x \in S$, we have  
\begin{equation}\label{eq:Motzkin_Straus}
    \x^\top A(G)\x \leq 1- \frac{1}{\omega(G)}.
\end{equation}
\end{theorem}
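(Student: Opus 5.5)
The statement is the classical Motzkin--Straus inequality. I would prove it by the standard support-shifting argument: show that the quadratic form $f(\x)=\x^\top A(G)\x=2\sum_{uv\in E}x_ux_v$ attains its maximum over $S$ at a vector supported on a clique. The bound then follows from a direct computation on that clique. Since $S$ is compact and $f$ is continuous, a maximizer exists. Among all maximizers I choose one, $\x$, whose support has minimum size, and I claim $\supp(\x)$ induces a clique.

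For the clique claim, suppose $u,w\in\supp(\x)$ are nonadjacent. For $t\in[-x_u,x_w]$, consider $\x_t=\x+t(\e_u-\e_w)$, which remains in $S$. Because $u\not\sim w$, the coefficient of $t^2$ in $f(\x_t)$ is $2A_{uw}-A_{uu}-A_{ww}=0$. Hence $f(\x_t)=f(\x)+2t\,\bigl((A\x)_u-(A\x)_w\bigr)$ is linear in $t$. Since $t=0$ is an interior point of the interval and $\x$ is a maximizer, the linear coefficient must vanish, so $f(\x_t)$ is constant on the whole interval. Moving to an endpoint, say $t=x_w$, gives another maximizer whose support is strictly smaller (it loses $w$, or $u$ at the other endpoint). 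This contradicts the minimality of $|\supp(\x)|$, so $\supp(\x)$ is a clique $K$ with $|K|=k\le\omega(G)$.

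To finish, on a clique we have $f(\x)=\bigl(\sum_{v\in K}x_v\bigr)^2-\sum_{v\in K}x_v^2=1-\sum_{v\in K}x_v^2$. By Cauchy--Schwarz, $\sum_{v\in K} x_v^2\ge 1/k$. Therefore, for every $\y\in S$,
\[ \y^\top A(G)\y\le f(\x)\le 1-\frac1k\le 1-\frac1{\omega(G)}. \]

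The only delicate step is the second one: the vanishing of the quadratic coefficient, which uses exactly the nonadjacency of $u$ and $w$, together with the argument that an interior maximum of a linear function forces it to be constant on the interval. Everything else is routine.
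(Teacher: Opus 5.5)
The paper gives no proof of this statement. It is quoted from Motzkin and Straus, and the only hint of a proof route is the remark in Section~\ref{section:graph_blowup_trick} that it is equivalent to Tur\'an's inequality \eqref{eq:Turan} via the blow-up trick. Your argument is correct and complete; it is the classical direct proof by support reduction. There is one cosmetic slip: the $t^2$-coefficient is $(\e_u-\e_w)^\top A(G)(\e_u-\e_w)=A_{uu}+A_{ww}-2A_{uw}$, the negative of what you wrote, but both vanish for nonadjacent $u,w$, so nothing is affected.

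The blow-up route works as follows. Apply \eqref{eq:Turan} to the graph $H$ obtained by replacing each $v$ with an independent set of size $Nx_v$. Use $\omega(H)=\omega(G)$ and $2|E(H)|/N^2=\x^\top A(G)\x$. Then pass from rational to arbitrary $\x\in S$ by continuity, exactly as in the proof of Theorem~\ref{thm:weighted_clique_indepenence}.

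Compared with that, your proof has several advantages:
\begin{itemize}
\item It is self-contained rather than presupposing Tur\'an's theorem; in fact it implies Tur\'an, via $\x=\frac1n\1$.
\item It needs no approximation step.
\item It exhibits a clique-supported maximizer, which makes sharpness transparent (the uniform vector on a maximum clique).
\end{itemize}

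The blow-up route is instead modular. Any edge-density inequality whose parameters are stable under blow-ups becomes a weighted quadratic-form inequality. That is how the paper obtains Theorem~\ref{thm:weighted_clique_indepenence} from Theorem~\ref{thm:clique_independence_edge}.

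It is also instructive to compare your key step with Claim~\ref{claim:function_f}. There the analogous perturbation has a strictly positive $\varepsilon^2$-coefficient $c(u)+c(v)+2-2A_{uv}$. So it yields only the first-order conditions $f(u)=f(v)$ and no support reduction, which is why the proof of Theorem~\ref{thm:localized_MS_independence} needs the additional good-independent-set argument. The vanishing quadratic term for nonadjacent pairs is exactly what makes your reduction to a clique possible.
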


Taking the complement of $G$, it is clear that \eqref{eq:Motzkin_Straus} is equivalent to the following inequality involving the independence number: for $\x\in S$, we have 
\begin{equation}\label{eq:Motzkin_Straus_independence}
    \x^\top (I + A(G))\x \ge \frac{1}{\alpha(G)}. 
\end{equation}

We first prove a new inequality similar in spirit to \eqref{eq:Motzkin_Straus_independence}, which we believe is of independent interest, and that will be key for us to establish the stronger Conjecture \ref{conj:Caro_Wei_localized} of Brause, Randerath, Rautenbach and Schiermeyer in Theorem \ref{thm:Caro_Wei_localized}. 

\begin{theorem}\label{thm:localized_MS_independence} Let $G$ be a graph with adjacency matrix $A(G)$. Consider the diagonal matrix $C(G):=\diag(c(v))_{v\in V(G)}$. Then, for any $\x\in S$, we have 
    \[  \x^\top (I + C(G) + A(G))\ \x \ge \frac{2}{\alpha(G)}. \]
Let $H_1, \ldots, H_k$ $(k\ge 1)$ denote the components of the induced subgraph $G[\supp(\x)]$. We have
\[\x^\top (I + C(G) + A(G))\ \x = \frac{2}{\alpha(G)}\]
if and only if all of the following hold:
\begin{enumerate}[$(i)$]
    \item $\alpha(G) = \alpha(G[\supp(\x)]) = \sum_{i=1}^k \alpha(H_i)$;
    \item $c_G(v) = c_{H_i}(v) = \omega(H_i)$ for all $v\in H_i$;
    \item for all $1\le i\le k$, there exists $d_i$ such that $H_i$ is $d_i$-regular and 
    \[\alpha(H_i)(d_i + \omega(H_i)+1) = 2|H_i|;\]
    \item for $v\in V(H_i)$, 
    \[ x_v = \frac{\alpha(H_i)}{\alpha(G)|H_i|}.\]
\end{enumerate}
\end{theorem}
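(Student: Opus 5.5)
The plan is to reduce to connected supports, study a minimizer of the quadratic form $f(\x):=\x^\top (I+C(G)+A(G))\x$ on the simplex through its Karush--Kuhn--Tucker (KKT) conditions, and close with a deletion induction; the equality case is read off afterwards. Write
\[f(\x)=\sum_v (1+c(v))x_v^2+2\sum_{uv\in E}x_ux_v .\]

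\textbf{Step 1 (monotonicity).} Passing to $G[\supp(\x)]$ deletes no terms of $f$ except through $c$. Since $c_G(v)\ge c_{H}(v)$ for any induced subgraph $H\ni v$, and $\alpha(G)\ge\alpha(G[\supp(\x)])$, it suffices to prove the inequality when $\x$ has full support. This step is also where the equality conditions $(i)$ and $(ii)$ will come from.

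\textbf{Step 2 (reduction to connected graphs).} Let $H_1,\dots,H_k$ be the components of the support, let $s_i=\sum_{v\in H_i}x_v$, and set $\x^{(i)}=\x|_{H_i}/s_i$. Because $f$ has no cross terms between components,
\[f(\x)=\sum_i s_i^2 f_{H_i}(\x^{(i)}).\]
If the connected case gives $f_{H_i}(\x^{(i)})\ge 2/\alpha(H_i)$, then minimising $\sum_i 2s_i^2/\alpha(H_i)$ subject to $\sum_i s_i=1$ (Cauchy--Schwarz) gives
\[f(\x)\ge \frac{2}{\sum_i\alpha(H_i)}\ge\frac{2}{\alpha(G)}.\]
Equality in Cauchy--Schwarz forces $s_i=\alpha(H_i)/\alpha(G)$, which feeds into condition $(iv)$.

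\textbf{Step 3 (connected case via KKT).} Let $H$ be connected, and take a minimiser $\x$ of $f_H$ on the simplex. By Step 1 we may pass to its support, and by Step 2 the support may be assumed connected. The KKT conditions then read
\[(1+c(v))x_v+\sum_{u\sim v}x_u=\lambda\quad\text{for all } v, \qquad f(\x)=\lambda.\]
We must show $\lambda\alpha(H)\ge 2$. I would argue by induction on $|H|$, with two ingredients.

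\emph{Deletion.} Pick $v$, remove $N[v]$, and apply induction to $H-N[v]$ together with $\alpha(H)\ge 1+\alpha(H-N[v])$.

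\emph{Closed neighbourhood.} The mass $\sigma_v=x_v+\sum_{u\sim v}x_u$ of $N[v]$ satisfies $\sigma_v=\lambda-c(v)x_v$. A maximum clique $K\ni v$ with $|K|=c(v)$ lies in $N[v]$. Summing the KKT identities over $K$ gives
\[c(v)\lambda\ \ge\ \sum_{w\in K}\big((1+c(w))x_w+(c(v)-1)x_w\big)\]
plus contributions from outside $K$. This links the clique term to the neighbourhood mass.

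Choosing $v$ to maximise $x_v$, or $x_v$-weighted averaging over $v$ in the style of the probabilistic proof of Caro--Wei, should give $f(\x)-\sigma_v$-type cross terms that the clique term absorbs. This is what produces the Fajtlowicz-like factor $2/(d+\omega+1)$ rather than $1/(d+1)$.

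\textbf{Main obstacle.} The step I expect to be hardest is Step 3. The form $I+C+A$ is not positive semidefinite in general, but it is strictly convex along every direction $\e_u-\e_v$. So the classical Motzkin--Straus trick of sliding mass until the support is a clique is unavailable, and one must genuinely exploit the KKT equations. My first attempt would sum the KKT identity weighted by $x_v$ over the closed neighbourhood of a vertex of maximum weight, and compare the result with the bound obtained by induction on $H-N[v]$.

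\textbf{Equality case.} Tracking equality through each inequality gives the stated conditions. The first equality in $(i)$ and condition $(ii)$ come from Step 1. The second equality in $(i)$ and the proportions $s_i=\alpha(H_i)/\alpha(G)$ come from Step 2. Regularity and the identity $\alpha(H_i)(d_i+\omega(H_i)+1)=2|H_i|$ in $(iii)$, together with uniform weights within each $H_i$ as in $(iv)$, come from equality in the Step 3 averaging. Conversely, substituting $(i)$--$(iv)$ into $f$ gives exactly $2/\alpha(G)$.
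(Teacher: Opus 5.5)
Your Steps 1 and 2 are correct and match the paper's reduction: restrict to the support using $c_G\ge c_H$ and $\alpha(G)\ge\alpha(G[\supp(\x)])$, then apply Cauchy--Schwarz over the components. Your KKT identities are exactly the paper's Claim~2.1. The gap is Step~3, which carries the entire inequality: you never derive $\lambda\,\alpha(H)\ge 2$. Deleting $N[v]$ and using $\alpha(H)\ge 1+\alpha(H-N[v])$ is the Caro--Wei greedy mechanism, which naturally gives $1/(\deg+1)$-type bounds, not the factor $2$. Write $a=\alpha(H-N[v])$ and $s=1-\sigma_v$. Induction only says that the part of $f(\x)$ living on $H-N[v]$ is at least $2s^2/a$. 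To close the induction you would therefore have to show that the terms supported on $N[v]$, plus the cross terms between $N[v]$ and $H-N[v]$, contribute at least $\frac{2}{a+1}-\frac{2s^2}{a}$. Your clique identity (a sum over a single clique $K\ni v$) gives no such bound. The sentence ``choosing $v$ to maximise $x_v$ or averaging should give cross terms that the clique term absorbs'' is a hope, not an argument. So the proposal contains no proof of the main inequality, and hence none of the equality case.

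The missing idea is a weighted form of Fajtlowicz's private-neighbour argument, and it needs no induction. Let $U$ be a maximum independent set of $G[\supp(\x)]$ that, among all such sets, maximises $\sum_{u\in U}x_u$. Summing the KKT identities over $U$ gives
\[|U|\,\lambda=\sum_{u\in U}(c(u)+1)x_u+\sum_{w\in\supp(\x)\setminus U}|N(w)\cap U|\,x_w .\]
Every $w\notin U$ has a neighbour in $U$. For $u\in U$, let $X_u$ be the set of $w$ whose only $U$-neighbour is $u$. Then $X_u$ is a clique, since otherwise two non-adjacent members could replace $u$ and enlarge $U$; hence $|X_u|\le c(u)-1$. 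Also $x_w\le x_u$ for $w\in X_u$, since otherwise swapping $w$ for $u$ would increase the weight. All remaining vertices have at least two $U$-neighbours. Therefore $|U|\lambda\ge 2+\sum_{u\in U}\bigl((c(u)-1)x_u-\sum_{w\in X_u}x_w\bigr)\ge 2$, and $|U|\le\alpha(G)$.

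For the equality case, Step~1 only yields $c_G(v)=c_{H_i}(v)$. Getting $c_{H_i}\equiv\omega(H_i)$, regularity, and uniform weights requires showing $x_a=x_b$ and $c(a)=c(b)$ across every edge $ab$. The paper does this by noting that at equality, swapping $u\in U$ with any private neighbour yields another optimal $U$. It then runs a case analysis on where $a$ and $b$ sit relative to $U$ and the sets $X_u$.
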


\begin{proof} Throughout the proof, let
\[ Q(\x):= \x^\top (I + C(G) + A(G))\ \x.\] Let $\z\in S$ be a global minimizer of the quantity $Q(\x)$, i.e.,
\[ Q(\z) = \min_{\x\in S}Q(\x).\]
Such a $\z$ exists because $S$ is a compact set. For $v\in V(G)$, define
\[ f(v) := (c(v)+1)z_v + \sum_{u\in N(v)}z_u.\]
We now need to show two claims to complete the proof of the inequality.
\begin{claim}\label{claim:function_f}
For all $u\in \supp(\z)$, 
\[ f(u) = Q(\z).\]
\end{claim}

\begin{proof} 
Suppose, to the contrary, that there exists $u,v \in \supp(\z)$ such that $f(u) > f(v)$. Take $\y = \z + \varepsilon (\e_v - \e_u)$, where $\e_u\in \mathbb{R}^{V(G)}$ is the vector with $1$ at the entry corresponding to $u$ and $0$ elsewhere, and $\varepsilon$ is sufficiently small. It is clear that $\y \in S$. Then,
{\small{
\begin{align*}
Q(\y)
&=
\bigl(\z+\varepsilon(\e_v-\e_u)\bigr)^\top
(I + C(G) + A(G))
\bigl(\z+\varepsilon(\e_v-\e_u)\bigr)\\
&=
Q(\z)
+2\varepsilon(\e_v-\e_u)^\top (I + C(G) + A(G))\z
+\varepsilon^2(\e_v-\e_u)^\top
(I + C(G) + A(G))(\e_v-\e_u).
\end{align*}
}}
The linear term is
{\small{
\begin{align*}
(\e_v-\e_u)^\top (I + C(G) + A(G))\z
&=((I + C(G) + A(G))\z)_v-((I + C(G) + A(G))\z)_u \\
&=f(v)-f(u).
\end{align*}
}}
Furthermore,
\begin{align*}
(\e_v-\e_u)^\top (I + C(G) + A(G))(\e_v-\e_u)
&=(c(v)+1)+(c(u)+1)-2A_{uv}.
\end{align*}
We therefore have
\begin{align*}
Q(\y)-Q(\z) 
&=2\varepsilon(f(v)-f(u))+\varepsilon^2\left(c(v)+c(u)+2-2A_{uv}\right) \\
&=\varepsilon\bigl(2(f(v)-f(u))+\varepsilon (c(v)+c(u)+2-2A_{uv})\bigr).
\end{align*}
Notice that $c(v)+c(u)+2-2A_{uv}>0$. Hence, choosing $\varepsilon$ to be small enough, we obtain
\[
Q(\y)-Q(\z)<0.
\]
This contradicts the fact that $\z$ is a global minimizer of $Q$ over
$S$. Consequently, for any $u,v\in\supp(\z)$, we have $f(u)=f(v)$.  

Moreover, for any vertex $u\in \supp(\z)$,
\[ Q(\z) = \sum_{v\in V(G)}z_v f(v) = f(u)\sum_{v\in V(G)}z_v = f(u), \]
and the desired claim holds.
\end{proof}

To finish the proof of the assertion, it suffices to show the following claim.

\begin{claim}\label{claim:2_alpha_inequality}
    We have 
    \[Q(\z)\ge \frac{2}{\alpha(G)}.\]
\end{claim}

\begin{proof} 
Among all maximum independent sets in the subgraph $G[\supp(\z)]$ induced by $\supp(\z)$ in $G$, let $U$ be a maximum independent set that has the maximum value of $\sum_{u\in U}z_u$. We call such an independent set $U$ a \emph{good independent set}. 

Note that any vertex $w\in \supp(\z)\backslash U$ has at least one neighbour in $U$ by the maximality of $U$. For each $u\in U$, define 
\[ X_u:=\{w\in \supp(\z)\backslash U: N(w)\cap U=\{u\}\} \qquad \text{and}\qquad X:=\cup_{u\in U} X_u.\]
We say that the elements of $X_u$ are \emph{private neighbours} of $u$. 

First, see that $X_u$ is a clique, for otherwise two non-adjacent vertices in $X_u$ could replace $u$ in $U$ to produce a larger independent set in $G[\supp(\z)]$. Now, since every vertex in $X_u$ is adjacent to $u$, we have 
\begin{equation}\label{eq:size_X_u}
   |X_u|\le c(u)-1. 
\end{equation}
Second, for every $w\in X_u$, the set $(U\backslash \{u\})\cup \{w\}$ is a maximum independent set in $G[\supp(\z)]$. The maximality of $\z(U)$ implies that $z_w\le z_u$. Using \eqref{eq:size_X_u},
\begin{equation}\label{eq:sum_over_X_u}
    \sum_{w\in X_u}z_w\le (c(u)-1)z_u.
\end{equation}

Now,  
\begin{align}\label{eq:total_sum}
    |U|\ Q(\z) & = \sum_{u\in U} (c(u)+1)z_u  +  \sum_{w\in \supp(\z)\backslash U} |N(w)\cap U|\ z_w\qquad (\text{by Claim 2.1})\nonumber\\
    & \ge \sum_{u\in U} (c(u)+1)z_u  + \sum_{u\in U} \sum_{w\in X_u}z_w + \sum_{w\in \supp(\z)\backslash (U\cup X)} 2z_w \nonumber\\
    & = 2 +  \sum_{u\in U} \left((c(u)-1)z_u - \sum_{w\in X_u}z_w\right)\nonumber\\
    & \ge 2 \qquad (\text{by }\eqref{eq:sum_over_X_u}). 
\end{align}
This completes the proof of the claim.
\end{proof}

Now, we will analyze the equality case. First we consider the special case when $G$ is connected and $\supp(\z) = V(G)$.

\begin{claim}\label{claim:connected_full_support} Suppose $G$ is connected, $\supp(\z) = V(G)$ and 
\[Q(\z) = \frac{2}{\alpha(G)}.\]
Then all of the following hold:
\begin{itemize}
    \item $c(v) = \omega(G)$; 
    \item $\z = \frac{1}{n}\1$;
    \item $G$ is $\Delta(G)$-regular and $\alpha(G) = \frac{2n}{(\Delta(G)+\omega(G)+1)}$.
\end{itemize}
\end{claim}

\begin{proof}
Since $Q(\z) = \frac{2}{\alpha(G)}$, all inequalities must be equalities in the proof of Claim \ref{claim:2_alpha_inequality}. 

By \eqref{eq:size_X_u}, \eqref{eq:sum_over_X_u} and \eqref{eq:total_sum}, for any $u\in U$, we have 
\begin{equation}\label{eq:private_neighbour}
   c(u) - 1 = |X_u|, \qquad z_u = z_w \quad (w\in X_u), 
\end{equation}
and every vertex $a\in V(G)\backslash (U\cup X)$ has exactly two neighbours in a good independent set $U$. Clearly, $X_u \cup \{u\}$ is a clique of order $c(u)$ which shows that $c(u)\le c(w)$ $(w\in X_u)$. 

For $w\in X_u$, the set 
\[U' = (U\backslash\{u\})\cup \{w\}\]
is also a good independent set by \eqref{eq:private_neighbour}. Relative to $U'$, $u$ is a private neighbour of $w$. Repeating the argument with $U'$ shows that $c(w)\le c(u)$. Thus we can conclude that 
\begin{equation}\label{eq:private_same_clique_size}
  z_u=z_w \quad \text{and}\quad c(u) = c(w)\quad (w\in X_u).  
\end{equation}

Now, let $a\in V(G)\backslash (U\cup X) = \supp(\z)\backslash (U\cup X)$. Then, 
\[N(a)\cap U = \{u,v\}\]
for distinct $u,v\in V(G)$. Since $X_u\cup \{u\}$ is a maximum clique containing $u$, $a$ cannot be adjacent to all vertices in $X_u$. So let $w\in X_u\backslash N(a)$. Relative to the good independent set
\[U' = (U\backslash\{u\})\cup \{w\},\]
$a$ is a private neighbour of $v$. Hence, using \eqref{eq:private_same_clique_size}, we have 
\[    z_a = z_v\quad \text{and}\quad  c(a) = c(v).\]
Interchanging the roles of $u$ and $v$, we get
\begin{equation}\label{eq:two_neighbour}
z_a =z_u = z_v\quad \text{and}\quad c(a)=c(u)=c(v). 
\end{equation}

Consider an edge $ab\in E(G)$. Using the above observations we will argue that $z_a =z_b$ and $c(a) = c(b)$. We consider the following cases:

\begin{description}
    \item[Case 1:] $a\in U$, $b\in X_a$. ~ Follows by \eqref{eq:private_same_clique_size}. 
    \item[Case 2:] $a\in U$, $b\in V(G)\backslash (U\cup X)$. ~ Follows by
\eqref{eq:two_neighbour}.
\item[Case 3:] $a, b\in X_u$ for some $u\in U$. ~ Follows by \eqref{eq:private_same_clique_size}.
\item[Case 4:] $a\in X_u$ and $b\in X_v$ $(u\neq v)$. ~ Then relative to good independent set $U'= (U\backslash\{u\})\cup \{a\}$, $b$ has two neighbours in $U'$ (namely, $v$ and $a$), and so by \eqref{eq:two_neighbour}, we have $z_b = z_a$ and $c(b)=c(a)$.

\item[Case 5:] $a\in X_u$ and $b\in V(G)\backslash (U\cup X)$. ~
 Then $ub\in E(G)$, for otherwise, $b$ has three neighbours in $U'= (U\backslash\{u\})\cup \{a\}$, which is impossible. By \eqref{eq:private_same_clique_size} and \eqref{eq:two_neighbour}, we have $z_a = z_u = z_b$ and $c(a)=c(u)=c(b)$. 
 \item[Case 6:] $a,b\in V(G)\backslash (U\cup X)$. ~ If $N(a)\cap N(b)\cap U\neq \emptyset$, then we are done by \eqref{eq:two_neighbour}. So assume that $(N(a)\cap U) = \{u,v\}$ and $(N(b)\cap U)=\{p,q\}$ are disjoint. We can find $w\in X_u$ such that $w$ is non-adjacent to $a$. Then 
\[U'=(U\backslash\{u,v\})\cup \{w,a\}\]
is a good independent set. But $b$ has three neighbours in $U'$ (namely, $a,p,q$), which is impossible. 
\end{description}

We have shown that for any edge $ab\in E(G)$, we have $z_a = z_b$ and $c(a)=c(b)$. Since $G$ is connected, we conclude that $c(v) = \omega(G)$ and $z_v = \frac{1}{n}$ for all $v\in V(G)$.

Finally, by Claim \ref{claim:function_f}, we see that 
\[ \deg(v) = \frac{2n}{\alpha(G)} - \omega(G)-1\]
for all $v\in \supp(\z) = V(G)$. Thus, $G$ is $\Delta(G)$-regular and $\alpha(G) = \frac{2n}{\Delta(G)+\omega(G)+1}$. The proof of the claim is complete. 
\end{proof}

Next, we analyze the equality in the general case. 

\begin{claim} Let $\x\in S$ and let $H_1, \ldots, H_k$ $(k\ge 1)$ denote the components of the induced subgraph $G[\supp(\x)]$. We have
\[Q(\x) = \frac{2}{\alpha(G)}\]
if and only if all of the following hold:
\begin{enumerate}[$(i)$]
    \item $\alpha(G) = \alpha(G[\supp(\x)]) = \sum_{i=1}^k \alpha(H_i)$;
    \item $c_G(v) = c_{H_i}(v) = \omega(H_i)$ for all $v\in H_i$;
    \item for all $1\le i\le k$, there exists $d_i$ such that $H_i$ is $d_i$-regular and 
    \[\alpha(H_i)(d_i + \omega(H_i)+1) = 2|H_i|;\]
    \item for $v\in V(H_i)$, 
    \[ z_v = \frac{\alpha(H_i)}{\alpha(G)|H_i|}.\]
\end{enumerate}
\end{claim}

\begin{proof} Let $H$ denote a subgraph of $G$ and let $\x\in \mathbb{R}^{V(H)}$ be such that $x_v\ge 0$ and $\sum_{v\in V(H)}x_v = 1$. Define 
\[Q_H(\x) = \x^\top (I+C(H)+A(H))\x,\] 
where $C(H) = \diag(c_H(v))_{v\in V(H)}$ and $A(H)$ is the adjacency matrix of $H$. 

We first prove necessity. By assumption  $\x\in S$ is a global minimizer of $Q(\x)$. For $1\le i\le k$, define
\[\x^{(i)}:=\frac{\x|_{H_i}}{\sum_{v\in V(H_i)}x_v},\]
where $\x|_{H_i}$ denotes the restriction of $\x$ to $H_i$. Then $Q_{H_i}(\x^{(i)})$ is well-defined.

Now, observe that 
\begin{align*}
    Q(\x) & = \sum_{i=1}^k \left(\sum_{v\in V(H_i)}x_v\right)^2\left[Q_{H_i}(\x^{(i)}) + \sum_{v\in V(H_i)}(c_G(v)-c_{H_i}(v))(\x^{(i)}_v)^2\right]\\
    & \ge \sum_{i=1}^k \left(\sum_{v\in V(H_i)}x_v\right)^2 \, Q_{H_i}(\x^{(i)})\quad (\text{since }c_G(v)\ge c_{H_i}(v))\\
    & \ge \sum_{i=1}^k \left(\sum_{v\in V(H_i)}x_v\right)^2 \frac{2}{\alpha(H_i)}\quad (\text{Claim \ref{claim:2_alpha_inequality} applied to $H_i$})\\
    & \ge 2\frac{\left(\sum_{i=1}^k \left(\sum_{v\in V(H_i)}x_v\right)\right)^2}{\sum_{i=1}^k \alpha(H_i)}\quad (\text{Cauchy-Schwarz})\\
    & = \frac{2}{\sum_{i=1}^k \alpha(H_i)}\\
    & \ge \frac{2}{\alpha(G)}.
\end{align*}
Since by assumption, $Q(\x) = \frac{2}{\alpha(G)}$, it follows that all of the inequalities above must be equalities. Indeed, we must have the following:
\begin{enumerate}[$(a)$]
    \item $\alpha(G) = \sum_{i=1}^k \alpha(H_i)$ which implies $(i)$;
    \item $c_G(v)=c_{H_i}(v)$ for all $v\in V(H_i)$;
    \item $Q_{H_i}(\x^{(i)}) = \frac{2}{\alpha(H_i)}$;
    \item $\frac{\sum_{v\in V(H_i)}x_v}{\alpha(H_i)}$ is the same for all $i$, which implies 
    \[\sum_{v\in V(H_i)} x_v= \frac{\alpha(H_i)}{\alpha(G)}.\] 
\end{enumerate}

By $(c)$, we conclude that $\x^{(i)}$ is a global minimizer of $Q_{H_i}(\x)$. Applying Claim \ref{claim:connected_full_support} to $H_i$ and $\x^{(i)}$, the following hold:
\begin{itemize}
    \item $c_{H_i}(v) = \omega(H_i)$ for all $v\in H_i$, which implies $(ii)$ by $(b)$;
    \item assertion $(iii)$ for all $i$;
    \item and $\x^{(i)} = \frac{1}{|H_i|}\1$, which implies $(iv)$  by $(d)$. 
\end{itemize}

Now, we discuss the sufficiency. If $\x$ and $G$ satisfy the four conditions $(i)-(iv)$, then the contribution of $H_i$ to $Q(\x)$ is $\frac{2\alpha(H_i)}{\alpha(G)^2}$. Summing over $i$ gives $Q(\x) = \frac{2}{\alpha(G)}$.  
\end{proof}
The proof of Theorem \ref{thm:localized_MS_independence} is complete.
\end{proof}

\begin{remark}
It is not true that $\x^\top (C(G)+I)\x \ge \frac{1}{\alpha(G)}$ for any $\x\in S$. For instance, if $G$ is the Clebsch graph, it is triangle-free with $n=16$, $\alpha(G) = 5$ and $c(v)=2$. For the normalized all ones vector $\x = \frac{1}{16}\1$, we have
\[\x^T(C(G)+I)\x = \frac{3}{16} < \frac{1}{5} = \frac{1}{\alpha(G)}.\]
\end{remark}

\section{A proof of the localized Caro--Wei bound}
\label{section:Caro_Wei_localization}

The following result establishes the stronger Conjecture \ref{conj:Caro_Wei_localized}, which also implies Conjecture \ref{conj:Bertram_Horak}.

\begin{theorem}\label{thm:Caro_Wei_localized}
For any graph $G$,  
  \[\alpha(G)\geq
  \sum_{v\in V(G)}\frac{2}{\deg(v)+c(v)+1}.\]
Moreover, equality holds if and only if $G$ is the disjoint union of $H_1, \ldots, H_k$ $(k\ge 1)$ such that $H_i$ is a $\Delta(H_i)$-regular graph and attains Fajtlowicz’s bound, i.e.,
\[ \alpha(H_i) = \frac{2|H_i|}{\Delta(H_i)+\omega(H_i)+1}\quad (1\le i\le k).\]
\end{theorem}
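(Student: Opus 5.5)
We derive the inequality from Theorem \ref{thm:localized_MS_independence} by a suitable choice of test vector, combined with Cauchy--Schwarz. For each $v\in V(G)$ set $a_v := \deg(v)+c(v)+1$ and let $W:=\sum_{v} \frac{1}{a_v}$. Take
\[ x_v := \frac{1/a_v}{W},\qquad \x\in S. \]
Then
\[ \x^\top(I+C(G)+A(G))\x = \sum_v (c(v)+1)x_v^2 + \sum_{uv\in E(G)} 2x_ux_v . \]
We bound the edge term from above by $2x_ux_v\le x_u^2+x_v^2$ (AM--GM). Summing over edges gives at most $\sum_v \deg(v)x_v^2$, so
\[ \frac{2}{\alpha(G)} \le \x^\top(I+C(G)+A(G))\x \le \sum_v a_v x_v^2 = \frac{1}{W^2}\sum_v \frac{1}{a_v} = \frac{1}{W}. \]
Rearranging gives $\alpha(G)\ge 2W=\sum_v \frac{2}{\deg(v)+c(v)+1}$, which is the claimed bound. (Equivalently, this is the Cauchy--Schwarz choice minimizing $\sum a_v x_v^2$ over $S$.)

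For the equality case, suppose $\alpha(G)=2W$. Then both inequalities above are tight. First, tightness of AM--GM forces $x_u=x_v$, i.e.\ $a_u=a_v$, for every edge $uv$. Second, $\x$ attains $\frac{2}{\alpha(G)}$ in Theorem \ref{thm:localized_MS_independence}. Since every $x_v>0$, the support is all of $V(G)$, so the $H_i$ there are exactly the components of $G$. Conditions $(ii)$ and $(iii)$ then say that each component $H_i$ is $d_i$-regular with $c(v)=\omega(H_i)$ and $\alpha(H_i)(d_i+\omega(H_i)+1)=2|H_i|$, which is Fajtlowicz's bound attained with $d_i=\Delta(H_i)$.

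Conversely, suppose $G$ is a disjoint union of such $H_i$. On $H_i$ every vertex has $\deg(v)=\Delta(H_i)$ and $c(v)\le\omega(H_i)$. We need $c(v)=\omega(H_i)$ exactly, and this is the only mildly delicate point. To get it, apply the already proven inequality to $H_i$:
\[ \alpha(H_i)\ge\sum_{v\in H_i}\frac{2}{\Delta(H_i)+c(v)+1}\ge \frac{2|H_i|}{\Delta(H_i)+\omega(H_i)+1}=\alpha(H_i). \]
This forces equality throughout, so $c(v)=\omega(H_i)$ and $\alpha(H_i)=\sum_{v\in H_i}\frac{2}{\deg(v)+c(v)+1}$. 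Summing over components, using $\alpha(G)=\sum_i\alpha(H_i)$, gives equality for $G$.

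The main obstacle is making the necessity direction line up cleanly with condition $(iii)$ of Theorem \ref{thm:localized_MS_independence}, in particular checking that full support identifies the $H_i$ with the components of $G$. The inequality itself is immediate once the weights $x_v\propto 1/a_v$ are chosen.
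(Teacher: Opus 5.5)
Your proof is correct and follows the paper's argument. It uses the same test vector $x_v\propto 1/(\deg(v)+c(v)+1)$, the same AM--GM bound on the edge terms, and, for necessity, the equality case of Theorem~\ref{thm:localized_MS_independence} with full support, so that the $H_i$ are the components of $G$. Your sufficiency argument, which applies the inequality to each $H_i$ to force $c(v)=\omega(H_i)$ and then sums, spells out what the paper compresses into a one-line appeal to Theorem~\ref{thm:localized_MS_independence}; that appeal would still require verifying its condition $(ii)$ separately.
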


\begin{proof}
Consider the following vector $\x \in \mathbb{R}^{V(G)}$, given by
\[
  x_v:=\frac{1}{\deg(v) + c(v)+ 1}. 
\]
Substituting this particular choice of vector $\x /\|\x\|_1$ into Theorem \ref{thm:localized_MS_independence}, we have
\[\x^T (I + C(G) + A(G)) \x \geq \frac{2\|x\|_1^2}{\alpha(G)}.\]
Now, for any edge $uv\in E(G)$, the AM-GM inequality gives $2x_ux_v\leq x_u^2+x_v^2$. Therefore
\begin{align*}
  \x^T (I + C(G) + A(G)) \x
  &\leq \sum_{v\in V(G)}(c(v)+1)x_v^2
  +\sum_{uv\in E(G)}(x_u^2+x_v^2)\\
  &=\sum_{v\in V(G)}(\deg(v)+c(v)+1)x_v^2\\
  &=\sum_{v\in V(G)} x_v = \|\x\|_1.
\end{align*}
Combining the above inequalities gives
\[
  \frac{2\|\x\|_1^2}{\alpha(G)}\leq \|\x\|_1,
\]
which implies
\[
  \alpha(G)\geq2\|\x\|_1
  =\sum_{v\in V(G)}\frac{2}{\deg(v)+c(v)+1},
\]
as desired. Moreover, the characterization for the equality follows from the equality case in Theorem \ref{thm:localized_MS_independence} applied to  $\frac{1}{\|x\|_1}\x$ and $G$. 
\end{proof}

\begin{remark}\label{remark:equality_Fajtlowicz}
Fajtlowicz \cite{Fajtlowicz_1984} investigated the tightness in Theorem \ref{thm:Fajtlowicz}. In particular, it is known that Fajtlowicz's bound is tight for complete graphs, the $5$-cycle, the complement of the Clebsch graph, and other graph classes. Further examples arise from triangle-free regular graphs whose independence number is equal to their degree. Sidorenko \cite{Sidorenko_1991} constructed a rich family of such graphs, and these graphs, together with further constructions, were subsequently studied by Brandt \cite{Brandt_2010}. The complements of all such graphs are tight for the Fajtlowicz bound. Moreover, if $G$ is tight for the Fajtlowicz bound, then its \emph{closed blowup} $G^{[t]}$ $(t\ge 1)$ is also tight (recall that $G^{[t]}$ is the graph obtained from $G$ by replacing every vertex $v\in V(G)$ with a clique $C_v$ of size $t$ and joining $C_u$ and $C_v$ whenever $uv\in E(G)$).
\end{remark}

Theorem \ref{thm:Caro_Wei_localized} has consequences beyond the study of the independence number; in particular,  
it yields a new Tur\'an-type inequality involving both the clique number and the independence number. In the next section, we will see that through the graph blow-up method, this inequality can be translated into a Motzkin--Straus type theorem, which we will then use to investigate bipartiteness in $K_{r+1}$-free graphs in Sections \ref{section:spectral_bipartiteness} and \ref{section:algebraic_bipartiteness}.

\section{The graph blow-up trick}
\label{section:graph_blowup_trick}
The classical Tur\'{a}n's inequality states that a graph $G$ of order $n$ and size $m$ satisfies:
\begin{equation}\label{eq:Turan}
    \frac{2m}{n^2}\le 1 - \frac{1}{\omega(G)}.
\end{equation}

The inequality \eqref{eq:Turan} and the Motzkin-Straus Theorem \ref{thm:Motzkin_Straus} are indeed equivalent statements, as pointed out by Nikiforov \cite{Nikiforov_2022} and formally proved by Sidorenko \cite{Sidorenko_1987}. This equivalence is typically established using the graph blow-up trick; see Phan \cite{Phan_2026} for a recent exposition.

We establish a new inequality which is similar but not comparable to \eqref{eq:Turan}. It follows as a consequence of Theorem \ref{thm:Caro_Wei_localized}. 

\begin{theorem}[Clique-independence edge inequality]\label{thm:clique_independence_edge} For any graph $G$ of order $n$ and size $m$, we have 
\begin{equation}\label{eq:clique_independence_edge}
    \frac{2m}{n^2}\le 1 - \frac{2}{\omega(G)} + \frac{\alpha(G)}{n}.
\end{equation}
\end{theorem}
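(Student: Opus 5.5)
The idea is to apply Theorem~\ref{thm:Caro_Wei_localized} to the complement $\overline{G}$ rather than to $G$, and then pass from the degree sequence to the edge count by convexity. Applying the localized bound to $G$ itself only bounds $m$ from below, which is the wrong direction. In $\overline{G}$ the roles of clique and independence number swap: $\omega(\overline{G})=\alpha(G)$, $\alpha(\overline{G})=\omega(G)$, and $\deg_{\overline{G}}(v)=n-1-\deg_G(v)$. The edge count becomes $\overline{m}=\binom{n}{2}-m$.

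First, for every vertex $v$ we have $c_{\overline{G}}(v)\le \omega(\overline{G})=\alpha(G)$. So Theorem~\ref{thm:Caro_Wei_localized} applied to $\overline{G}$ gives
\[
\omega(G)=\alpha(\overline{G})\ \ge\ \sum_{v\in V(G)}\frac{2}{\deg_{\overline{G}}(v)+c_{\overline{G}}(v)+1}\ \ge\ \sum_{v\in V(G)}\frac{2}{\deg_{\overline{G}}(v)+\alpha(G)+1}.
\]
(The weaker Bertram--Hor\'ak form, Conjecture~\ref{conj:Bertram_Horak}, already suffices here.)

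Next, the function $t\mapsto 1/t$ is convex, so by Cauchy--Schwarz (or Jensen) we have $\sum_v 1/a_v\ge n^2/\sum_v a_v$. Since $\sum_v \deg_{\overline{G}}(v)=2\overline{m}=n(n-1)-2m$, this gives
\[
\omega(G)\ \ge\ \frac{2n^2}{n(n-1)-2m+n(\alpha(G)+1)}=\frac{2n^2}{n^2-2m+n\alpha(G)}.
\]

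Finally, the denominator is positive, so we can rearrange to get $n^2-2m+n\alpha(G)\ge 2n^2/\omega(G)$. Dividing by $n^2$ gives exactly $\frac{2m}{n^2}\le 1-\frac{2}{\omega(G)}+\frac{\alpha(G)}{n}$.

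The only real obstacle is seeing that the complement is the right object; once that is done, each step is a direct substitution. I would double-check the bookkeeping $\overline{m}=n(n-1)/2-m$ and the cancellation of the $-n$ and $+n$ terms in the denominator.
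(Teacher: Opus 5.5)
Your proposal is correct and follows essentially the same route as the paper: apply Theorem~\ref{thm:Caro_Wei_localized} to $\overline{G}$, use $c_{\overline{G}}(v)\le\omega(\overline{G})=\alpha(G)$, bound the sum by Cauchy--Schwarz (Titu's lemma) with $\sum_v\deg_{\overline{G}}(v)=n(n-1)-2m$, and rearrange. Your remark that the weaker Bertram--Hor\'ak form already suffices is also accurate, since the paper's argument likewise uses only the global $\omega(\overline{G})$.
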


\begin{proof} We apply Theorem \ref{thm:Caro_Wei_localized} to the complement $\overline{G}$ of $G$. We have 
\begin{align*} 
\omega(G) & = \alpha(\overline{G})\\
& \ge \sum_{v\in V(\overline{G})} \frac{2}{\deg_{\overline{G}}(v) + \omega(\overline{G}) + 1}\\
& \ge  \frac{2n^2}{\left(\sum_{v\in V(\overline{G})} \deg_{\overline{G}}(v)\right) + n(\omega(\overline{G}) + 1)} \qquad (\text{by Titu's Lemma})\\
& = \frac{2n^2}{(n(n-1)-2m) + n(\alpha(G)+1)}\\
& = \frac{2n^2}{n(n+\alpha(G))-2m}.
\end{align*}
Rearranging gives the desired inequality \eqref{eq:clique_independence_edge}. 
\end{proof}

\begin{remark}
   The inequality \eqref{eq:clique_independence_edge} is tight for $C_5$, whereas classical Tur\'{a}n's inequality \eqref{eq:Turan} is not tight. For diamond $K_4-e$, classical Tur\'{a}n's inequality \eqref{eq:Turan} gives a better upper bound than the inequality \eqref{eq:clique_independence_edge}. Hence, these two inequalities are not comparable.
\end{remark}

Now, using the blow-up trick, one can rewrite \eqref{eq:clique_independence_edge} as an inequality analogous to the Motzkin-Straus Theorem, as we show below.

\begin{theorem}\label{thm:weighted_clique_indepenence} Let $G$ be a graph with $\omega(G)\ge 2$. For any $\x\in S$, we have 
\begin{equation}\label{eq:weighted_clique_indepenence}
    \x^\top A(G)\x \le 1-\frac{2}{\omega(G)} + \alpha_{\x}(G),
\end{equation}
where 
\[ \alpha_{\x}(G) := \max\left\{\sum_{v\in U} x_v : U \text{ is an independent set in }G\right\}.\]
\end{theorem}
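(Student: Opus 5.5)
We plan to deduce \eqref{eq:weighted_clique_indepenence} from Theorem \ref{thm:clique_independence_edge} by the standard blow-up trick. Both sides are continuous in $\x$: the left side is a quadratic form, and $\alpha_{\x}(G)$ is a maximum of finitely many linear functions. It therefore suffices to prove the inequality for rational $\x\in S$ and pass to the limit. So we write $x_v = n_v/N$ with nonnegative integers $n_v$ and $\sum_v n_v = N$.

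For such $\x$, let $G_N$ be the \emph{open} blow-up of $G$ on $\supp(\x)$. Each vertex $v$ with $n_v>0$ is replaced by an independent set $I_v$ of size $n_v$, and $I_u$ and $I_v$ are completely joined whenever $uv\in E(G)$. Vertices with $n_v = 0$ are deleted. Then $G_N$ has order $N$, and its size is
\[
m_N=\sum_{uv\in E(G)} n_un_v ,
\]
so that $2m_N/N^2 = \x^\top A(G)\x$.

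Next we record two facts about $G_N$. First, $\omega(G_N)=\omega(G[\supp(\x)])\le \omega(G)$: a clique in $G_N$ uses at most one vertex from each $I_v$, and its projection is a clique of $G$. Second, $\alpha(G_N)= N\,\alpha_{\x}(G)$. For the lower bound, an independent set $U$ of $G$ gives the independent set $\bigcup_{v\in U} I_v$ of size $N\sum_{v\in U}x_v$. For the upper bound, if $W$ is independent in $G_N$, then the set of $v$ with $W\cap I_v\neq\emptyset$ is independent in $G$. Enlarging $W$ to the full classes $I_v$ keeps it independent, so $|W|\le N\sum_{v\in U}x_v$ for some independent set $U$.

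Applying Theorem \ref{thm:clique_independence_edge} to $G_N$ gives
\[
\x^\top A(G)\x=\frac{2m_N}{N^2}\le 1-\frac{2}{\omega(G_N)}+\frac{\alpha(G_N)}{N} = 1-\frac{2}{\omega(G_N)}+\alpha_{\x}(G).
\]
Since $\omega(G_N)\le\omega(G)$, we have $-2/\omega(G_N)\le -2/\omega(G)$, which gives the claim.

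The main thing to watch is the degenerate case where $\omega(G_N)=1$, i.e.\ $\supp(\x)$ is independent in $G$. Here the monotonicity step still holds: $1-2/1=-1\le 1-2/\omega(G)$. In any case the inequality is trivial in this situation, because the left side is $0$ while $\alpha_{\x}(G)=1$ and $1-2/\omega(G)\ge 0$. This is exactly where the hypothesis $\omega(G)\ge 2$ is used. Beyond this, the only steps needing care are the identification $\alpha(G_N)=N\alpha_{\x}(G)$ and the continuity argument.
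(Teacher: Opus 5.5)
Your proposal is correct and follows the paper's proof: blow each vertex up into an independent set of size $Nx_v$, apply Theorem~\ref{thm:clique_independence_edge}, and extend to all of $S$ by density and continuity. The only difference is that the paper restricts to entrywise positive rational $\x$ so that $\omega(H)=\omega(G)$ holds exactly, whereas you allow zero entries and use $\omega(G_N)\le\omega(G)$ with monotonicity; this works equally well. As a small aside, the hypothesis $\omega(G)\ge 2$ is not actually needed (for edgeless $G$ both sides of \eqref{eq:weighted_clique_indepenence} equal $0$), so your remark that this is where the hypothesis gets used is slightly overstated.
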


\begin{proof} We will first prove the assertion for entrywise positive rational vectors $\x$. Choose a positive integer $N$ large enough so that $Nx_v$ is an integer for all $v\in V(G)$. Let $H$ denote the graph obtained from $G$ by replacing vertex $v$ with an independent set $U_v$ of size $Nx_v$ and joining $U_v$ and $U_w$ whenever $vw\in E(G)$. The graph $H$ is called a \emph{blowup} of $G$. Clearly, 
\begin{equation}\label{eq:blowup_parameters}
    \sum_{v\in V(G)} Nx_v = N, \quad \omega(H) = \omega(G), \quad \alpha(H) = N \alpha_{\x}(G), \quad \frac{2|E(H)|}{N^2} = \x^{\top}A(G)\x.
\end{equation}
Thus, by Theorem \ref{thm:clique_independence_edge}, we have 
\begin{align*}
    \frac{2|E(H)|}{N^2}\le 1-\frac{2}{\omega(H)} + \frac{\alpha(H)}{N}.
\end{align*}
Using \eqref{eq:blowup_parameters}, we see that the above inequality is equivalent to the desired inequality. This completes the proof for entrywise positive rational vectors in $S$. 

Since entrywise positive rational vectors are dense in $S$, a limiting argument proves the inequality for all vectors in $S$ since the left-hand side and the right-hand side of \eqref{eq:weighted_clique_indepenence} are continuous functions of $\x$. 
\end{proof}

\begin{remark}
The proof of Theorem \ref{thm:weighted_clique_indepenence} shows that it is implied by Theorem \ref{thm:clique_independence_edge}. Conversely, taking $\x = \frac{1}{n}\1$, where $\1$ is the all-ones vector in Theorem \ref{thm:weighted_clique_indepenence} recovers Theorem \ref{thm:clique_independence_edge}. 
\end{remark}

\section{Spectral bipartiteness in $K_{r+1}$-free graphs}
\label{section:spectral_bipartiteness}

In this section, we analyse the spectral bipartiteness of $K_{r+1}$-free graphs, and we extend Brandt's bound from Theorem \ref{thm:spectral_bipartiteness_r_old} $(i)$  to all $K_{r+1}$-free graphs. The main goal is to prove Theorems \ref{thm:q_min_new_bound} and \ref{thm:K4_free_counterexample}.

First, we recall a Perron-weighted generalization of the well-known Hoffman ratio bound proved by Fiol \cite{Fiol_1999} that we will use. Note that the entries of the Perron vector quantify the relative structural importance of individual vertices, concentrating heavily on vertices with high local degrees or those belonging to dense subgraphs (cliques). Since then, weight partitions using Perron eigenvector entries have been shown to be very useful to tackle graph theory problems, see e.g. \cite{Abiad_2019_weight_regular,Liu_Ning_2026}.
First, we recall a Perron-weighted generalization of the well-known Hoffman ratio bound proved by Fiol \cite{Fiol_1999} that we will use. Note that the entries of the Perron vector quantify the relative structural importance of individual vertices, concentrating heavily on vertices with high local degrees or those belonging to dense subgraphs (cliques). Since then, weight partitions using Perron eigenvector entries have been shown to be very useful to tackle graph theory problems, see e.g. \cite{Abiad_2019_weight_regular,Fiol_1999,Liu_Ning_2026}.

\begin{theorem}[Perron-weighted Hoffman ratio bound \cite{Fiol_1999}]\label{thm:weighted_Hoffman} Let $G$ be a graph and $\x$ be a non-negative unit eigenvector for $\lambda_1(G)>0$. Let $U$ be an independent set. Then 
\[ \sum_{u\in U}x_u^2 \le \frac{-\lambda_n(G)}{\lambda_1(G)-\lambda_n(G)}.\]
\end{theorem}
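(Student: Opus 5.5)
The plan is to test the Rayleigh-quotient characterization of $\lambda_n(G)$ against a suitable combination of $\x$ and its restriction to $U$. Write $A=A(G)$ and $s:=\sum_{u\in U}x_u^2\in[0,1]$. If $s=0$ there is nothing to prove, because the right-hand side is nonnegative: $\lambda_1(G)>0$ and $\operatorname{tr}A=0$ force $\lambda_n(G)<0$. So I will assume $s>0$.

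Let $\y$ be the vector that agrees with $\x$ on $U$ and is $0$ elsewhere. The inner products I need are the following.
\begin{itemize}
\item Since $U$ is independent, $\y^\top A\y=0$.
\item Since $A\x=\lambda_1(G)\x$, we have $\x^\top A\y=\lambda_1(G)\,\x^\top\y=\lambda_1(G)s$ and $\x^\top A\x=\lambda_1(G)$.
\item For the norms, $\|\y\|^2=\x^\top\y=s$ and $\|\x\|^2=1$.
\end{itemize}

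The key step is to apply $\mathbf{w}^\top A\mathbf{w}\ge\lambda_n(G)\|\mathbf{w}\|^2$ to $\mathbf{w}=\y-\beta\x$ with a well-chosen $\beta$. Expanding gives
\[
\mathbf{w}^\top A\mathbf{w}=-2\beta\lambda_1(G)s+\beta^2\lambda_1(G),\qquad \|\mathbf{w}\|^2=s-2\beta s+\beta^2 .
\]
Taking $\beta=s$ simplifies these to $-\lambda_1(G)s^2$ and $s-s^2$, respectively. Hence $-\lambda_1(G)s^2\ge\lambda_n(G)(s-s^2)$.

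Dividing by $s>0$ yields $\lambda_1(G)s\le-\lambda_n(G)(1-s)$, that is, $s(\lambda_1(G)-\lambda_n(G))\le-\lambda_n(G)$. Since $\lambda_1(G)-\lambda_n(G)>0$, this is exactly the claimed bound.

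The only real choice in this argument is the value of $\beta$. One could instead optimize over $\beta$ in general, but $\beta=s$, which makes $\mathbf{w}$ orthogonal to $\x$, already gives the sharp bound. Nonnegativity of $\x$ is not actually needed here; only that $\x$ is a unit $\lambda_1(G)$-eigenvector.
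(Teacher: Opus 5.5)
The paper gives no proof of this statement. It imports it from Fiol \cite{Fiol_1999} and uses it as a black box, so there is nothing in the paper to match step by step.

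Your argument is correct and complete. The expansions of $\mathbf{w}^\top A\mathbf{w}$ and $\|\mathbf{w}\|^2$ are right, and the case $s=0$ is correctly dispatched via $\lambda_n(G)<0$. Your choice $\beta=s$ is also the best one, since $\mathbf{w}^\top A\mathbf{w}-\lambda_n(G)\|\mathbf{w}\|^2=(\lambda_1(G)-\lambda_n(G))(\beta^2-2\beta s)-\lambda_n(G)s$ is minimized at $\beta=s$.

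The route in Fiol's work is interlacing:
\begin{itemize}
\item Compress $A$ onto the orthonormal pair $\y/\sqrt{s}$, $(\x-\y)/\sqrt{1-s}$, which corresponds to the weight partition $\{U,V(G)\setminus U\}$. This gives a $2\times 2$ quotient matrix $B$ with $B_{11}=0$.
\item $B$ has $\lambda_1(G)$ as an eigenvalue, because $\x$ lies in that span. Its second eigenvalue is therefore $\det B/\lambda_1(G)=-\lambda_1(G)s/(1-s)$.
\item Interlacing forces this second eigenvalue to be at least $\lambda_n(G)$, which is the bound.
\end{itemize}

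Your test vector $\y-s\x$ is exactly the lift of that second eigenvector of $B$; its Rayleigh quotient is $-\lambda_1(G)s/(1-s)$. So your proof is the elementary, interlacing-free unpacking of the same inequality. It makes transparent that neither the nonnegativity of $\x$ nor the connectedness of $G$ plays any role. The quotient-matrix formulation is what generalizes to finer weight partitions and to the other interlacing-based bounds.
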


We also recall here a well-known result of Wilf \cite{Wilf_1986}. 

\begin{theorem}[Wilf's inequality \cite{Wilf_1986}]\label{thm:wilf_inequality}
    For any graph $G$ of order $n$, we have
    \[\lambda_1(G) \leq n\left(1 - \frac{1}{\omega(G)}\right).\]
\end{theorem}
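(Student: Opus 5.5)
The statement to prove is Wilf's inequality (Theorem \ref{thm:wilf_inequality}). I would derive it by combining the Motzkin--Straus Theorem \ref{thm:Motzkin_Straus} with the Rayleigh quotient characterization of $\lambda_1(G)$ and the Cauchy--Schwarz inequality.

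First, let $\y$ be a non-negative unit eigenvector of $A(G)$ for $\lambda_1(G)$; by Perron--Frobenius such a vector exists even if $G$ is disconnected. Then
\[ \lambda_1(G) = \y^\top A(G)\y .\]
If $\y = \mathbf{0}$ on all edges, the claim is trivial since $\lambda_1\ge 0$ and the right-hand side is non-negative. So assume $s := \sum_{v} y_v > 0$, which always holds because $\y$ is a non-negative unit vector.

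Next, set $\x := \y/s \in S$. Applying Theorem \ref{thm:Motzkin_Straus} gives
\[ \y^\top A(G)\y = s^2\, \x^\top A(G)\x \le s^2\left(1-\frac{1}{\omega(G)}\right). \]

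Finally, by Cauchy--Schwarz, $s^2 = \left(\sum_v y_v\right)^2 \le n\sum_v y_v^2 = n$. Combining the two bounds yields $\lambda_1(G)\le n\left(1-\frac{1}{\omega(G)}\right)$.

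There is no real obstacle in this argument. The only point needing care is the existence of a non-negative Perron vector for a possibly disconnected graph, which follows from applying Perron--Frobenius to a component attaining $\lambda_1$ and extending by zero. This also makes the normalization $\x\in S$ legitimate.
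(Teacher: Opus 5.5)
Your proof is correct. The paper does not prove this statement; it only cites Wilf~\cite{Wilf_1986}, and your route is the standard proof from Wilf's paper: apply Motzkin--Straus to the normalized nonnegative Perron vector, then use $s^2\le n\|\y\|_2^2$ (Cauchy--Schwarz). Your intermediate bound $\lambda_1(G)\le s^2\left(1-\frac{1}{\omega(G)}\right)$ is in fact Wilf's sharper form. The only blemish is the sentence about $\y$ vanishing on all edges, which is superfluous since $s>0$ holds automatically for a nonnegative unit vector.
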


We are now ready to prove the main result of this section. 

\begin{theorem}
For any graph $G$ of order $n$, we have
 \[\lambda_1(G) + \lambda_n(G) \leq n \left(2\sqrt{1-\frac{1}{\omega(G)}}-1\right)^2.\]
\end{theorem}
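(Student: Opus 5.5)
Write $\lambda_1=\lambda_1(G)$, $\lambda_n=\lambda_n(G)$, $\omega=\omega(G)$, and let $\x$ be a non-negative unit Perron eigenvector of $A(G)$. If $G$ has no edges then $\lambda_1+\lambda_n=0$ and there is nothing to prove, so assume $\omega\ge 2$ and $\lambda_1>0$. The plan is to apply Theorem \ref{thm:weighted_clique_indepenence} to the probability vector $\y$ with $y_v=x_v^2$, which lies in $S$ because $\x$ is a unit vector. This gives
\[ \sum_{uv\in E(G)} 2x_u^2x_v^2 \le 1-\frac{2}{\omega}+\alpha_{\y}(G). \]
By Theorem \ref{thm:weighted_Hoffman}, every independent set $U$ satisfies $\sum_{u\in U}x_u^2\le \frac{-\lambda_n}{\lambda_1-\lambda_n}$. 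Hence
\[ \alpha_{\y}(G)\le \frac{-\lambda_n}{\lambda_1-\lambda_n}. \]

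It remains to bound $\y^\top A\y=\sum_{uv\in E}2x_u^2x_v^2$ from below in terms of $\lambda_1$ and $n$. First, Cauchy--Schwarz over ordered pairs gives
\[ \lambda_1=\sum_{uv}A_{uv}x_ux_v\le \Big(\sum_{uv}A_{uv}x_u^2x_v^2\Big)^{1/2}\Big(\sum_{uv}A_{uv}\Big)^{1/2}. \]
This only yields $\y^\top A\y\ge \lambda_1^2/(2m)$, and $2m$ can be as large as about $n^2$. To get an $n$-scale bound I will instead use $\sum_{uv}A_{uv}\le n\lambda_1$, which follows from $\1^\top A\1\le \lambda_1 n$ (Rayleigh quotient). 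Combining the two gives
\[ \y^\top A\y\ \ge\ \frac{\lambda_1^2}{n\lambda_1}=\frac{\lambda_1}{n}. \]
Set $a=\lambda_1/n$ and $b=-\lambda_n/n\ge 0$. The two steps together give
\[ a\le 1-\frac{2}{\omega}+\frac{b}{a+b}. \]

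Finally I optimize. Put $t=a-b=(\lambda_1+\lambda_n)/n$, the quantity to be bounded, and $c=1-\frac{1}{\omega}$. Wilf's inequality (Theorem \ref{thm:wilf_inequality}) gives $a\le c$. The constraint rewrites as
\[ a-(2c-1)\le \frac{b}{a+b}=\frac{a-t}{2a-t}, \]
which is a condition on $t$ for fixed $a$. We want the maximal $t$ over $a\in(0,c]$. Solving,
\[ t\le \frac{a\bigl(1-2(a-2c+1)\bigr)}{1-(a-2c+1)}. \]
Setting $s=1-(a-2c+1)=2c-a$, this reads $t\le a(2s-1)/s$, i.e. $t\le 2a-a/s$ with $a=2c-s$. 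So $t\le 2(2c-s)-(2c-s)/s=4c-2s-2c/s+1$. Maximizing in $s$ gives $s=\sqrt{c}$, and the maximum is $4c-4\sqrt c+1=(2\sqrt c-1)^2$, which is exactly the claimed bound. I still need to check that the range constraints are consistent: $s\in[c,\dots)$ since $a\le c$, and $\sqrt c\ge c$, so the unconstrained optimizer is admissible and the bound holds for every $a$.

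I expect the main obstacle to be the lower bound $\y^\top A\y\ge \lambda_1/n$; if the inequality $\sum A_{uv}\le n\lambda_1$ pairing does not work out as intended, I would fall back on bounding $\sum_{uv}A_{uv}x_u^2x_v^2\ge(\sum_v x_v^2\cdot\lambda_1 x_v\cdot x_v)\ldots$ via $\sum_v x_v^4\ge 1/n$ together with the eigen-equation. The remaining algebra also needs care with the signs when clearing denominators.
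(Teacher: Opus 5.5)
Your proposal is correct and is essentially the paper's proof: you use the same vector $y_v=x_v^2$, combine Theorem \ref{thm:weighted_clique_indepenence} with the Perron-weighted Hoffman bound, use the same estimate $\y^\top A\y\ge \lambda_1^2/(2m)\ge \lambda_1/n$, and perform the same one-variable maximization. The differences are cosmetic: clearing the positive denominators $2a-t$ and $2c-a$ directly removes the paper's case split on the sign of $\lambda_1/n-(1-2/\omega)$, and you carry out the maximization at $s=\sqrt c$ explicitly. Since that maximum is taken over all $s>0$, Wilf's inequality is only needed to ensure $s=2c-a>0$, which already follows from $\lambda_1<n$.
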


\begin{proof}
We can assume that $G$ is non-empty. Let $\x$ denote the unit non-negative eigenvector corresponding to $\lambda_1$. Let $\y = (y_v)_{v\in V(G)}$ be the vector such that $y_v = x_v^2$ for all $v\in V(G)$. Note that 
\[ \lambda_1^2 = \left(2\sum_{uv\in E(G)}x_ux_v\right)^2 \le 4m\sum_{uv\in E(G)}x_u^2x_v^2 = 2m\, \y^\top A(G)\y.\]

Using Theorem \ref{thm:weighted_clique_indepenence}, 
\begin{align}\label{eq:alpha_y}
  \alpha_{\y}(G)& \ge \y^\top A(G)\y - \left(1- \frac{2}{\omega}\right)\nonumber\\
  & \ge \frac{\lambda_1^2}{2m} - \left(1- \frac{2}{\omega}\right) \nonumber\\
  & \ge \frac{\lambda_1}{n} - \left(1- \frac{2}{\omega}\right),
\end{align}
where the last inequality holds since $\lambda_1(G)\ge \frac{2m}{n}$. We consider the following cases:

\textbf{Case 1:} $\frac{\lambda_1}{n} \le 1- \frac{2}{\omega}$.

Since $G$ is a non-empty graph, $\lambda_n<0$. Thus, 
\[ \frac{\lambda_1 + \lambda_n}{n}\le \frac{\lambda_1}{n}\le 1-\frac{2}{\omega}\le \left(2\sqrt{1-\frac{1}{\omega}}-1\right)^2.\]

\textbf{Case 2:}  $\frac{\lambda_1}{n} > 1- \frac{2}{\omega}$.

Note that the function $x \mapsto \frac{x}{1-x}$ is an increasing function on $[0,1)$. Hence, using Theorem \ref{thm:weighted_Hoffman} and \eqref{eq:alpha_y},
\[-\lambda_n \ge \frac{\lambda_1 \alpha_{\y}}{1-\alpha_{\y}}\ge  \frac{\lambda_1\left( \frac{\lambda_1}{n} - \left(1- \frac{2}{\omega}\right)\right)}{1 - \left( \frac{\lambda_1}{n} - \left(1- \frac{2}{\omega}\right)\right)}=\frac{\lambda_1^2 - n\lambda_1\left(1- \frac{2}{\omega}\right)}{n-\lambda_1+n\left(1- \frac{2}{\omega}\right)}.\]
Therefore, 
\[\lambda_1+\lambda_n \le \lambda_1 - \frac{\lambda_1^2 - n\lambda_1\left(1- \frac{2}{\omega}\right)}{n-\lambda_1+n\left(1- \frac{2}{\omega}\right)}.\]
Treating the right-hand side as a function of $\lambda_1$ and assuming that $n$ and $\omega \ge 1$ are fixed, one can see that the maximum value of the right-hand side over the interval $\lambda_1 \in \left[0, n(1-\frac{1}{\omega})\right]$ (by Theorem \ref{thm:wilf_inequality}) is 
\[n \left(2\sqrt{1-\frac{1}{\omega}}-1\right)^2\]
attained at $\lambda_1 = 2n-\frac{2n}{\omega} - n\sqrt{1-\frac{1}{\omega}}$. The assertion follows.
\end{proof}

\section{Algebraic bipartiteness in $K_{r+1}$-free graphs}
\label{section:algebraic_bipartiteness}

We recall a generalization of Motzkin-Straus and spectral Tur\'{a}n's theorem to doubly non-negative matrices given by Liu, Tang, and Zhang \cite{Liu_Tang_Zhang_2026}. A matrix $M$ is called \emph{doubly non-negative} if it is both positive semi-definite and entrywise non-negative. 

\begin{theorem}[\cite{Liu_Tang_Zhang_2026}]\label{thm:DNN_Motzkin_Straus}
For any graph $G$ of order $n$, and every doubly
non-negative matrix $M$, we have
\[\sum_{uv \in E(G)}2\sqrt{M_{uv}} \leq n \left(1- \frac{1}{\omega(G)}\right)\sqrt{\sum_{u,v \in V(G)} M_{uv}} .\]
\end{theorem}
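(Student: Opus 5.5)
The inequality is homogeneous of degree $\tfrac12$ in $M$, so I will normalise $\sum_{u,v}M_{uv}=1$ and view the left-hand side $F(M):=\sum_{uv\in E(G)}2\sqrt{M_{uv}}$ as a concave function on the compact convex set $\mathcal{D}$ of doubly non-negative matrices with $\langle J,M\rangle=1$. The target is $F(M)\le n(1-1/\omega)$ on $\mathcal{D}$.

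The naive route fails. Cauchy--Schwarz gives $F(M)^2\le 2m\,\langle A,M\rangle$, and Tur\'an \eqref{eq:Turan} gives $2m\le n^2(1-1/\omega)$. The remaining step would be $\langle A,M\rangle\le 1-1/\omega$ on $\mathcal{D}$. This holds for completely positive $M$, by applying Theorem \ref{thm:Motzkin_Straus} to each rank-one term $\x\x^\top$ with $\x\ge 0$. It can fail for general doubly non-negative $M$, because the DNN relaxation of Motzkin--Straus has a theta-type gap. So the square root must be exploited more carefully than Cauchy--Schwarz allows.

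\textbf{Step 1 (linearisation).} For positive edge weights $a_{uv}$, AM--GM gives
\[
2\sqrt{M_{uv}}\le \frac{M_{uv}}{a_{uv}}+a_{uv}.
\]
Hence $F(M)\le \langle \hat A,M\rangle+\sum_{uv\in E}a_{uv}$, where $\hat A$ carries the entries $1/a_{uv}$ on edges. By conic duality, the dual of the DNN cone is $\mathrm{PSD}+\mathcal N$, with $\mathcal N$ the entrywise non-negative matrices. So it suffices to find $\lambda$ such that $\lambda J-\hat A\in \mathrm{PSD}+\mathcal N$, which yields $F(M)\le\lambda+\sum_{uv\in E} a_{uv}$. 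A uniform choice $a_{uv}\equiv a$ reproduces the failing argument above, so the weights must depend on $G$.

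\textbf{Step 2 (choice of certificate).} I will not choose weights for arbitrary $G$ directly. I will first try to reduce to the case where $G$ is a complete $\omega$-partite graph $K$ with parts $V_1,\dots,V_\omega$, using a Zykov-type symmetrisation that keeps $M$ fixed and modifies $G$ without creating $K_{\omega+1}$. In that case
\[
F(M)=\sum_{u,v}\sqrt{M_{uv}}-\sum_{i}\sum_{u,v\in V_i}\sqrt{M_{uv}}.
\]
Writing $M_{uv}=\langle b_u,b_v\rangle$ for Gram vectors with pairwise non-negative inner products, I will try to bound the first sum by $n\|\sum_v b_v\|$ and the diagonal-block sums from below by $\tfrac{n}{\omega}\|\sum_v b_v\|$. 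The lower bound should come from convexity over the $\omega$ parts together with $\sqrt{M_{uv}}\ge M_{uv}/\sqrt{M_{uu}M_{vv}}\cdot\min(\cdot)$-type estimates. Equivalently, this amounts to exhibiting the explicit dual certificate $\lambda J-\hat A$ in Step 1 for complete multipartite graphs, with $a_{uv}$ chosen proportional to $\sqrt{M^*_{uv}}$ at a putative maximiser $M^*$, via the first-order KKT conditions of the concave maximisation.

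\textbf{Main obstacle.} The hardest part is the reduction in Step 2. Symmetrisation for $\x^\top A\x$ uses the product structure of the weights $x_ux_v$, whereas the edge weights $\sqrt{M_{uv}}$ here are not of product form, so a copy/delete step may not be monotone in $F$. If symmetrisation fails, I will instead work with the KKT system at an optimal $M^*$: stationarity forces $\sum_{v\sim u}M^*_{uv}{}^{-1/2}(\cdot)$ to be balanced. From that I would build the certificate $\lambda J-\hat A\in\mathrm{PSD}+\mathcal N$ with $\lambda+\sum a_{uv}=n(1-1/\omega)$, using Theorem \ref{thm:Motzkin_Straus} on the support of a nonnegative Perron-type vector of $\hat A$ to control its PSD part.
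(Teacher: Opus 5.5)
The paper does not prove this statement; it quotes it from Liu--Tang--Zhang. So your proposal has to stand on its own, and it does not: it is a plan whose one substantive step fails.

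Step~1 is essentially a reformulation. At a maximiser $M^*$, with $a_{uv}=\sqrt{M^*_{uv}}$, a certificate $\lambda J-\hat A\in\mathrm{PSD}+\mathcal N$ with $\lambda+\sum a_{uv}=n(1-1/\omega)$ exists precisely when the inequality is true. Also, $\hat A$ must carry $1/(2a_{uv})$, not $1/a_{uv}$. As written, your bound exceeds $F$ by $\sum_{uv\in E}M_{uv}/a_{uv}$ and can never be tight.

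Everything is therefore deferred to Step~2, and its reduction is false, not merely hard. Take $G=C_5$ ($\omega=2$) and let $M_{uu}=1$, $M_{uv}=\tfrac12$ on the five cycle edges, and $M_{uv}=0$ otherwise. Its eigenvalues are $1+\cos(2\pi k/5)>0$, so $M$ is doubly non-negative, and $F_{C_5}(M)=5\sqrt2$. Any bipartite graph on these vertices contains at most four cycle edges, and all other pairs have $M_{uv}=0$. Hence $F_K(M)\le 4\sqrt2$ for every complete bipartite $K$. So no operation that keeps $M$ fixed can move $G$ to a complete $\omega$-partite graph without decreasing $F$.

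Moving the Gram vectors as well does not rescue the Motzkin--Straus mechanism. Consider $b_u\mapsto b_u+tb_v$, $b_v\mapsto(1-t)b_v$ for $u\not\sim v$, which preserves double non-negativity and $\sum_v b_v$. Because of the square roots, $F$ is concave in $t$ rather than linear, so the optimum need not sit at an endpoint. Your fallback (KKT conditions plus Motzkin--Straus applied to a Perron vector of $\hat A$) is not an argument. Motzkin--Straus concerns the $0/1$ matrix $A$, not a weighted $\hat A$.

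Even the complete multipartite case is mishandled. The lower bound $\sum_i\sum_{u,v\in V_i}\sqrt{M_{uv}}\ge\frac{n}{\omega}\bigl\|\sum_v b_v\bigr\|$ fails for $M=I_n$ whenever $\omega<\sqrt n$: the left side is $n$ and the right side is $n^{3/2}/\omega$. So the two sums cannot be bounded separately.

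That case is in fact easy. Put $s_i=\sum_{u\in V_i}b_u$ and $s=\sum_i s_i$. Then $\langle J-A_K,M\rangle=\sum_i\|s_i\|^2\ge\|s\|^2/\omega$, so $\langle A_K,M\rangle\le(1-1/\omega)\langle J,M\rangle$ for every positive semidefinite $M$. Cauchy--Schwarz together with Tur\'an's bound $2m\le n^2(1-1/\omega)$ then finishes.

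This pinpoints the real difficulty: graphs that are not complete multipartite, such as $C_5$. There, as you correctly observed, the doubly non-negative version of Motzkin--Straus fails, and a proof must exploit the slack in the edge count through the square roots. Your proposal contains no mechanism for that, so it does not prove the theorem.
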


Now, we are ready to prove Theorem \ref{thm:q_min_new_bound}, which improves the upper bound on algebraic bipartiteness of de Lima et al. (see Theorem \ref{thm:spectral_bipartiteness_r_old} $(ii)$). We first prove the following more general result, which bounds the algebraic bipartiteness of a graph in terms of its clique number. Theorem \ref{thm:q_min_new_bound} follows immediately by applying the following result to $K_{r+1}$-free graphs.

\begin{theorem}
    For any graph $G$ of order $n$, we have
    \[\frac{q_n(G)}{n} \leq \left(1-\frac{1}{\omega(G)}\right)^2.\]
\end{theorem}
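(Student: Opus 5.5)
The plan is to apply Theorem~\ref{thm:DNN_Motzkin_Straus} to the shifted signless Laplacian
\[
M := Q(G) - q_n(G)\, I .
\]
Write $q_n = q_n(G)$ and $\omega=\omega(G)$. This gives a quadratic inequality in $m = |E(G)|$, and optimizing it over $m$ yields the claimed bound.

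\textbf{Step 1: $M$ is doubly non-negative.} Since $q_n$ is the least eigenvalue of $Q(G)$, all eigenvalues of $M$ are non-negative, so $M$ is positive semi-definite. Off the diagonal, $M_{uv} = A_{uv} \in \{0,1\}$. On the diagonal, $M_{vv} = \deg(v) - q_n$. Testing the Rayleigh quotient of $Q(G)$ at the standard basis vector $\e_v$ gives $q_n \le \e_v^\top Q(G)\e_v = \deg(v)$ for every $v$. Hence all diagonal entries are non-negative, and $M$ is doubly non-negative.

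\textbf{Step 2: apply Theorem~\ref{thm:DNN_Motzkin_Straus}.} For $uv\in E(G)$ we have $M_{uv}=1$, so the left-hand side equals $2m$. The total entry sum is
\[
\sum_{u,v} M_{uv} = \sum_v \deg(v) - n q_n + 2m = 4m - n q_n .
\]
Writing $c := 1-\frac{1}{\omega}$, the theorem gives
\[
2m \le n c\sqrt{4m - n q_n}.
\]
Squaring (both sides are non-negative) and rearranging yields
\[
n q_n \le 4m - \frac{4m^2}{n^2c^2}.
\]
This needs $c>0$; if $\omega=1$ then $G$ is edgeless, $q_n=0$, and the claim is trivial.

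\textbf{Step 3: optimize over $m$.} The right-hand side of the last display is a concave quadratic in $m$. Its maximum is at $m = n^2c^2/2$, where it equals $2n^2c^2 - n^2c^2 = n^2c^2$. Therefore $n q_n \le n^2 c^2$, that is,
\[
\frac{q_n(G)}{n}\le\left(1-\frac1{\omega(G)}\right)^2 .
\]

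The only genuinely non-routine point is the choice of the DNN matrix in Step~1. The natural first choice $M=Q(G)$ only recovers Tur\'an's bound. Subtracting $q_n I$ is exactly what injects $q_n$ into the inequality, and it remains legitimate because of the elementary fact $q_n\le\delta(G)$.
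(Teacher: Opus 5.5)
Your proof is correct and is essentially the paper's argument: you apply the Liu--Tang--Zhang theorem to $M=Q(G)-q_n(G)I$ (doubly non-negative since $q_n\le\delta(G)$) to get $2m\le n(1-\frac{1}{\omega})\sqrt{4m-nq_n}$, and then bound the resulting concave quadratic in $m$ by its maximum, which the paper does by completing the square. One small correction to your closing aside, which does not affect the proof: with $M=Q(G)$ the theorem gives $m\le n^2(1-\frac{1}{\omega})^2$, which coincides with Mantel's bound for $\omega=2$ but is weaker than Tur\'an's bound for $\omega\ge 3$, so it does not actually recover Tur\'an's bound.
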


\begin{proof}
    If the graph $G$ is empty, then it is clear that the result holds, hence we can assume without loss of generality, that the graph $G$ is non-empty. Since $Q$ is a positive semi-definite matrix, it is easy to see that $q_n$ is less than the minimum degree of the graph $G$, and thus 
    \[M = Q - q_nI,\]
    is a positive semi-definite and non-negative matrix, hence $M$ is a doubly non-negative matrix.

    Applying Theorem \ref{thm:DNN_Motzkin_Straus} to $M$ and noting that $M_{uv} =1$, whenever $uv \in E(G)$, we get
    \begin{align*}
        2m \leq n  \left(1- \frac{1}{\omega}\right) \sqrt{4m - nq_n}.
    \end{align*}
    Squaring and rearranging, we get
    \[q_n \leq \frac{4m}{n} - \frac{(2m)^2}{n^3\left(1- \frac{1}{\omega}\right)^2}.\]
    Dividing further by $n$, we get
    \begin{align*}
        \frac{q_n}{n} &\leq \frac{4m}{n^2}  - \frac{(2m)^2}{n^4 \left(1- \frac{1}{\omega}\right)^2} \\
        & = \left(1- \frac{1}{\omega}\right)^2 - \frac{\left(\frac{2m}{n^2} - \left(1- \frac{1}{\omega}\right)^2\right)^2}{\left(1- \frac{1}{\omega}\right)^2} \\
        & \leq \left(1- \frac{1}{\omega}\right)^2. 
    \end{align*}
This completes the proof.     
\end{proof}

Next, we will show Theorem \ref{thm:K4_free_counterexample}, thus disproving Conjecture \ref{conj:algebraic_K_r_free} for $r=3$. To do so we will need the following result on the signless Laplacian spectra of join of two regular graphs.

\begin{theorem}[\cite{Barik_Kalita_Pati_Sahoo_2018}]\label{thm:q_join_spectrum}
Let $G_1$ be an $r_1$-regular graph on $n_1$ vertices and $G_2$ be an $r_2$-regular graph on $n_2$ vertices. Then the signless Laplacian spectrum of $G_1\lor G_2$ is given by
\[q_i(G_1) + n_2,\qquad q_j(G_2)+n_1,\]
and 
\[\frac{n_1+n_2 + 2(r_1+r_2) \pm \sqrt{(n_1+n_2)^2-4(r_1-r_2)(n_1-n_2-r_1+r_2)}}{2},\]
where $2\le i \le n_1$ and $2\le j\le n_2$.
\end{theorem}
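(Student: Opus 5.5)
The plan is to find an eigenbasis of $Q(G_1\lor G_2)$ adapted to the join structure. Index the vertices as $V(G_1)\cup V(G_2)$. Then
\[
Q(G_1\lor G_2)=\begin{pmatrix} Q(G_1)+n_2 I & J_{n_1\times n_2}\\ J_{n_2\times n_1} & Q(G_2)+n_1 I\end{pmatrix},
\]
since each vertex of $G_1$ gains $n_2$ to its degree and every cross pair is an edge. Because $G_1$ is $r_1$-regular, $Q(G_1)\1=2r_1\1$. As $Q(G_1)$ is real symmetric, we can choose an orthonormal eigenbasis of $Q(G_1)$ that contains $\1/\sqrt{n_1}$ (eigenvalue $2r_1=q_1(G_1)$). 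The remaining eigenvectors $\x_2,\dots,\x_{n_1}$ have eigenvalues $q_2(G_1),\dots,q_{n_1}(G_1)$ and are orthogonal to $\1$. One caveat: if $G_1$ is disconnected, $2r_1$ is repeated, but we still just remove one copy of it, namely the one attached to $\1$. The same choices are made for $G_2$.

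For $2\le i\le n_1$, extend $\x_i$ by zeros on $V(G_2)$. Then $J\x_i=0$, so the lower block contributes nothing, and the upper block gives $(q_i(G_1)+n_2)\x_i$. This yields the eigenvalues $q_i(G_1)+n_2$. Symmetrically, we obtain $q_j(G_2)+n_1$ for $2\le j\le n_2$. This accounts for $n_1+n_2-2$ mutually orthogonal eigenvectors.

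The two remaining eigenvectors lie in the span of $(\1_{n_1},0)$ and $(0,\1_{n_2})$, which is the orthogonal complement of the vectors above. Substitute $(a\1,b\1)$. The first block gives $(2r_1+n_2)a+n_2 b$ on $V(G_1)$, and the second gives $n_1 a+(2r_2+n_1)b$ on $V(G_2)$. So the span is invariant and $Q$ acts on it through the quotient matrix
\[
B=\begin{pmatrix} 2r_1+n_2 & n_2\\ n_1 & 2r_2+n_1\end{pmatrix}.
\]
Its eigenvalues are the last two eigenvalues of $Q(G_1\lor G_2)$. We have $\operatorname{tr}B=n_1+n_2+2(r_1+r_2)$, so the eigenvalues are $\bigl(\operatorname{tr}B\pm\sqrt{(\operatorname{tr}B)^2-4\det B}\bigr)/2$.

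The only computational step is to simplify the discriminant $(\operatorname{tr}B)^2-4\det B=(2r_1+n_2-2r_2-n_1)^2+4n_1n_2$. Set $s=2(r_1-r_2)$ and $t=n_2-n_1$, so this is $(s+t)^2+4n_1n_2$. Then use $t^2+4n_1n_2=(n_1+n_2)^2$ and $s^2+2st=4(r_1-r_2)(r_1-r_2+n_2-n_1)=-4(r_1-r_2)(n_1-n_2-r_1+r_2)$. This matches the stated expression. I do not expect a real obstacle; the main care is in the bookkeeping of the orthogonal decomposition, so that the eigenvalue count totals $n_1+n_2$ with multiplicities.
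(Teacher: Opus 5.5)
Your argument is correct and complete. The paper gives no proof of this statement and simply cites Barik--Kalita--Pati--Sahoo. Your route is the standard proof of this fact: eigenvectors of $Q(G_k)$ orthogonal to $\1$, extended by zeros, together with the $2\times 2$ quotient matrix on the span of $(\1,0)$ and $(0,\1)$. Your discriminant simplification also checks out.

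The only step that deserves one explicit line is why the discarded eigenvalue $2r_1$ is exactly $q_1(G_1)$. This holds because $Q(G_1)=r_1I+A(G_1)$ and $\lambda_1(A(G_1))=r_1$, and it is what makes the remaining multiset exactly $q_2(G_1),\dots,q_{n_1}(G_1)$.
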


Now, we are ready to prove Theorem \ref{thm:K4_free_counterexample}.
\begin{proof}[Proof of Theorem \ref{thm:K4_free_counterexample}]

For integers $t\geq 1$ and $n-5t\geq 2$, let
\[
G(t):=\overline{K}_{n-5t}\vee C_5^{(t)},
\]
where $C_5^{(t)}$ denotes the graph obtained from the cycle $C_5$ by replacing every vertex $v$ with an independent set $U_v$ of size $t$ and add all edges between $U_u$ and $U_v$ whenever $u$ is adjacent to $v.$
The graph $C_5^{(t)}$ is triangle-free, and so $\omega(G(t)) = 3$. 

The eigenvalues of $Q(C_5^{(t)})$ are
\[
4t^{(1)},
\frac{(3+\sqrt5)t}{2}^{(2)},
2t^{(5t-5)}, 
\frac{(3-\sqrt5)t}{2}^{(2)}.
\]

Applying Theorem~\ref{thm:q_join_spectrum}, we have
\begin{align} \label{eq:qmin_values}
q_n(G(t))
=
\min\left\{
5t,\,
n-\frac{7+\sqrt5}{2}t,\,
\frac{n+4t-\sqrt{n^2+8nt-64t^2}}{2}
\right\}.  
\end{align}

For all sufficiently large $n$, set
\[
t_n
:=
\left\lfloor
\frac{15+\sqrt5}{95+11\sqrt5}\,n
\right\rfloor.
\]
Then, we have $t_n\geq 1$ and $n-5t_n\geq 2$ for all sufficiently large
$n$, and so  
\[
G_n
:= G(t_n)
\]
is well-defined. Using \eqref{eq:qmin_values} observe that
\[\lim_{n\to \infty}\frac{q_n(G_n)}{n} = \min\left\{5\frac{15+\sqrt5}{95+11\sqrt5},\, \frac{60-4\sqrt5}{137+7\sqrt5},\, \frac{60-4\sqrt5}{137+7\sqrt5}\right\} = \frac{60-4\sqrt5}{137+7\sqrt5} > \frac{1}{3}.\]
This completes the proof. 
\end{proof}

\section{Open problems}
\label{section:open_problems}
In this paper, we improved the general upper bounds for spectral and algebraic bipartiteness of $K_{r+1}$-free graphs and, along the way, also proved Conjecture \ref{conj:Caro_Wei_localized} and disproved Conjecture \ref{conj:algebraic_K_r_free} when $r = 3$. Several questions remain open. 

For instance, the triangle-free graph case still remains a challenge.

\begin{conjecture} Let $G$ be a triangle-free graph of order $n$. 
\begin{enumerate}[$(i)$]
    \item \emph{(cf. \cite{Balogh_Clemen_Felix_Lidicky_Norin_Volec_2023})} We have $q_n(G) \leq 0.14 n$.
    \item We have $\lambda_1(G) + \lambda_n(G) \leq 0.14 n$.
\end{enumerate}
\end{conjecture}

Note that the bound $0.14n$ is attained by the Higman--Sims graph, which is a strongly regular graph $\SRG(100,22,0,6)$, and its blowups for both $q_n$ and $\lambda_1 + \lambda_n$.  

For higher $K_{r+1}$-free graphs, we pose the following conjecture. 

\begin{conjecture} Let $G$ be a $K_{r+1}$-free of order $n$.
\begin{enumerate}[$(i)$]
    \item For $r \ge 4$, we have
     \[ \frac{q_n(G)}{n} \leq  1 - \frac{2}{r}.\]
     \item For $r \ge 3$, we have 
    \[ \frac{\lambda_1(G)+\lambda_n(G)}{n} \leq  1 - \frac{2}{r}.\]
\end{enumerate}
\end{conjecture}

It is interesting to note that our counterexample for $q_n$ when $r=3$ (Theorem \ref{thm:K4_free_counterexample}) does not work for $\lambda_1+\lambda_n$. 

We note that spectral bipartiteness $\lambda_1 + \lambda_n$ has also been investigated for graphs with high odd girth by Abiad, Taranchuk, and Veluw \cite{Abiad_Taranchuk_Veluw_2026} and Yip \cite{Yip_2026}. A similar question can be asked for the algebraic bipartiteness $q_n$. 

Finally, characterizing the equality for the Fajtlowicz bound is an interesting and challenging problem; see Remark \ref{remark:equality_Fajtlowicz}.

\subsection*{Acknowledgements}
Aida Abiad is supported by NWO (Dutch Research Council) through the grant \linebreak  VI.Vidi.213.085. This work was done in part while the first author was visiting the Simons Institute for the Theory of Computing at UC Berkeley.

\subsection*{Declaration of AI use}

The authors acknowledge the use of ChatGPT (GPT-5.6, OpenAI; accessed August 2026) solely for preliminary brainstorming and the exploration of possible proof strategies. AI tools were not used to draft the manuscript. All formal statements, arguments, and proofs in the manuscript were written and checked by the authors, who take full responsibility for the accuracy and integrity of the article.

\bibliographystyle{abbrv}
\bibliography{references}

\vspace{0.4cm}

\affl{Aida Abiad}{a.abiad.monge@tue.nl}{Department of Mathematics and Computer Science, Eindhoven University of Technology, The Netherlands \\ Department of Mathematics and Data Science, Vrije Universiteit Brussel, Belgium}

\affl{Hitesh Kumar}{hitesh.kumar.math@gmail.com, hitesh\_kumar@sfu.ca}{Department of Mathematics, Simon Fraser University, Burnaby, Canada}

\affl{Shivaramakrishna Pragada}{shivaramkratos@gmail.com, shivaramakrishna\_pragada@sfu.ca}{Department of Mathematics, Simon Fraser University, Burnaby, Canada}

\end{document}